\newcommand{\CC}{\mathbb{C}}
\newcommand{\RR}{\mathbb{R}}     
\newcommand{\ZZ}{\mathbb{Z}}
\DeclareMathOperator{\Ind}{Ind}
\DeclareMathOperator{\ind}{ind}
\DeclareMathOperator{\nul}{nul}
\DeclareMathOperator{\Sym}{Sym}
\DeclareMathOperator{\Hom}{Hom}
\DeclareMathOperator{\colsp}{colsp}
\DeclareMathOperator{\Det}{Det}
\newcommand{\RP}{\mathbb{R}\mathrm{P}}
\newtheorem{teo}{Theorem}[section]
\newtheorem{lema}[teo]{Lemma}
\newtheorem{prop}[teo]{Proposition}
\theoremstyle{definition}
\title[Arnold's proof of the Morse index theorem]{Revisiting Arnold's topological\\
proof of the Morse index theorem}
\author[E. V. Sodré]{Eduardo V. Sodré}
\address{Departamento de Matemática, Universidade de São Paulo, Brazil}
\curraddr{Department of Mathematics, Brown University, USA}
\email{eduardo\_sodre@brown.edu}
\date{July 2\textsuperscript{nd} 2023}
\subjclass{58E10 (Primary), 53D12 (Secondary)}
\keywords{Morse index, Maslov index, Lagrangian Grassmannian}
\begin{document}

\maketitle
\begin{abstract}

We give an exposition of the Morse Index Theorem in the Riemannian case in terms of the Maslov index, following and expanding upon Arnold's seminal paper. We emphasize the symplectic arguments in the proof and aim to be as self-contained as possible.

\end{abstract}

\section{Introduction}

Carl Gustav Jacobi in 1842 \cite{jacobi} seems to have been the first to investigate whether the principle of least action in the calculus of variations always yields minima as opposed to other kinds of stationary points, and most of his work focused on geodesics in two-dimensional surfaces. Marston Morse gave the first clear general statements in the 1920s and 1930s, leading to what is known today as Morse theory \cite{morse2}.
In particular, his celebrated Index Theorem roughly states that the number of essentially shorter routes near a given geodesic can be computed as the number of conjugate points along the geodesic (counting multiplicity).
Perhaps as a sign of the depth of this statement, one can find in the literature countless variations, extensions and generalizations of Morse's index theorem, together with methods of proofs of different flavors. As Bott put it in \cite{bott}, ``in all properly posed variational problems there is some kind of index theorem''.

Beautiful as finite-dimensional Morse theory can be, the real goal of Morse was the infinite-dimensional calculus of variations setting of Morse theory. He considers a functional
\[
J(\sigma) = \int_a^b F(\sigma,\dot\sigma)\,dt \]
in some space of paths \( \Omega \), subject to a certain nondegeneracy condition and admissible boundary conditions. Here the ``tangent space'' to an extremal \( \sigma \) of \( J \) is the set of vector fields along it, and the ``Hessian'' of \( J \) at \( \sigma \) is given by the second variation of \( J \). By choosing a frame along \( \sigma \) a vector field along \( \sigma \) is identified with an \( \mathbb R^n \)-valued function of the parameter \(t\) along \(\sigma\) and, upon integration by parts,
that Hessian takes the form
\[ 
\int_a^b \langle Lx,x\rangle\,dt, 
\]
where \( x(t) \) represents a vector field along \( \sigma \) and \( \langle,\rangle \) denotes the pointwise inner product, for a self-adjoint second order linear differential operator \( L \). The Sturm-Liouville eigenvalue problem
\[ 
Lx = \lambda x 
\]
subject to boundary conditions turns out to be well-posed and thus has a finite-dimensional solution space for \( \lambda \leq 0 \). Morse proceeds to define the index and nullity for \( \sigma \) respectively as the dimension of the space of solutions of \( Lx = 0 \) and the dimension of the space of solutions of \( Lx = \lambda x \) with \( \lambda < 0 \). 

In Riemannian geometry, for \( J \) one takes the energy functional on a complete Riemannian manifold \( M \), given by 
\[ 
E(\gamma) = \int_a^b ||\dot\gamma||^2\,dt 
\]
and defined on the space \(\Omega\) of piecewise smooth curves parametrized by \( t \in [a,b] \) proportional to arc-length, and the boundary conditions are the fixed endpoint conditions
\[
\gamma(a) = p, \quad \gamma(b) = q,
\]
for fixed \(p\), \(q \in M\). The extremals are exactly the geodesics, and here the eigenvalue problem presents itself as
\[
-Y'' + R(\dot\gamma,X) \dot\gamma = \lambda X, \qquad Y(a) = Y(b) = 0,
\]
where the prime denotes covariant differentiation of the vector field \( Y \) along \( \gamma \) and \( R \) denotes the curvature tensor. The index of \( \gamma \) manifests itself as the obstruction to the geodesic being a local minimum of the energy, as subspaces on which the Hessian is negative definite are the directions on which we can perturb \( \gamma \) and obtain shorter paths. The solutions in case \(\lambda = 0 \) are called \emph{Jacobi fields}, and points \(p\) and \(q\) are called \emph{conjugate along \(\gamma\)} in case there is a nonzero Jacobi field vanishing at \(a\) and \(b\); in this case the \emph{multiplicity} of such a conjugate pair is the dimension of the space of such Jacobi fields. He then succeeds in proving the Morse inequalities for any nondegenerate \( J \), and arrives at his beautiful Index Theorem: \emph{the index of an extremal \( \gamma \) in the fixed endpoints case   equals the number of conjugate points of one endpoint in the interior of \( \gamma \), counting multiplicity.}

Morse himself applied the Index Theorem to obtain deep results about existence of geodesics in the $2$-sphere with an arbitrary metric, and Bott \cite{bott2} was led by a similar analysis to his celebrated Periodicity Theorem. Morse's original proof of the Index Theorem has been expounded by Ambrose \cite{ambrose} and made concise by Osborn \cite{osborn}, and has been generalized to PDEs by Smale \cite{smale} and minimal submanifolds by Simons \cite{simons}. Uhlenbeck \cite{uhlenbeck} gave a proof based on Hilbert spaces and applied it to minimal submanifolds as well. It has also been presented accesibly in book form by Milnor \cite{milnor} and further divulged by do Carmo \cite{docarmo}. On a different vein, the Morse theory in Hilbert spaces has beend developed by Palais \cite{palais} (see also the textbook by Klingenberg \cite{klingenberg}). The Index Theorem represent a natural extension of the classical Sturm-Liouville theory of differential equations and, as such, it has been considered by Edwards \cite{edwards} and Zhu \cite{zhu} by adapting it to higher order systems. It was also applied in pseudo-Riemannian geometry by Helfer \cite{helfer} and by Giannoni, Masiello, Piccione and Tausk \cites{giannoni, piccione}, such as in the case of conjugate points along spacelike geodesics. For a $K$-theoretic approach to Morse's Index Theorem, see \cite{waterstraat}. Similar ideas are used in a generalization of Morse theory called Floer theory \cites{floer, robbin}. 

We were particularly attracted to Arnold's original paper \cite{arnold}, displaying an ingenuity and simplicty so characteristic of him, and this text is our attempt to present his arguments from our point of view. From a modern perspective, and considering the self-adjointness of the Jacobi operator,  we want to make evident the almost inevitablity of the appearence of symplectic methods, revolving around the Maslov-Arnold index. Rephrasing the result in a topological way, in terms of intersections of Lagragian subspaces, opens up new venues and vistas. This is an approach also taken by Duistermaat \cite{duistermaat} and Lytchak \cite{lytchak}, and by Piccione and Tausk \cite{piccione2}.

Our aim has been to follow a most ``natural'' path (not always the shortest one), based on elementary arguments and simplified constructions, and to be as transparent as possible. For this reason we also restrict the discussion to the most basic case, that is, Riemannian geodesics with fixed endpoints.

We now sketch the main issues involved in our exposition of Arnold's ideas. 
In the Riemannian case, consider a geodesic \( \gamma \) defined on an interval \( [a,b] \), and denote by \( H_t \) the Hessian of the energy functional defined on the space of vector fields along \(\gamma|_{[a,t]}\) vanishing at \(a\) and \(t\). We like to think of the Index Theorem as the following
chain of equalities:
\[
\mathrm{ind}(H_b) = \sum_{\lambda<0}\mathrm{nul}(H_b-\lambda I)
  = \sum_{\lambda\in(\lambda_0,0)}\mathrm{nul}(H_b-\lambda I)
  = \sum_{t\in(a,b)}\mathrm{nul}(H_t),
\]
where \( \lambda_0 \) is some negative number. The first equality would be clear in finite dimensions, but requires some discussion in infinite dimensions. The second equality is due to the fact that the corresponding Sturm-Liouville problem has eigenvalues bounded below.
Indeed, it follows from standard Sturm-Liouville theory that there are finitely many negative eigenvalues, but we circumvent this extra background in our topological approach.

The last equality is at the core of our discussion, and is obtained from interpreting the relevant nullities as intersection numbers of a certain \(1\)-cycle with the canonical Maslov cycle in the Lagrangian Grassmannian. This \(1\)-cycle is a homologically trivial curve of Lagrangian subspaces constructed from the Jacobi equation, hence the total intersection number must vanish, from which we derive the Index Theorem.

\section{Riemannian Geometry}

With the notation used in the introduction, let \( \Gamma \) the set of smooth vector fields along a geodesic \( \gamma: [a,b] \to M \) and \( \Gamma_0 \subset \Gamma \) those vector fields which vanish at the endpoints. The index form \( H_b: \Gamma_0 \times \Gamma_0 \to \RR \), arising from the second variation of the energy, is given by \begin{align} \label{indexform1}
H_b(X,Y) & =  \int_a^b \langle X', Y' \rangle + \langle R(\dot{\gamma}, X)\dot{\gamma}, Y \rangle ds \\ \label{indexform2}
& = \int_a^b \langle - X'' + R(\dot{\gamma}, X)\dot{\gamma}, Y\rangle ds.
\end{align}
It is bilinear and symmetric, and we naturally consider those \( X \in \Gamma \) that satisfy
\[
-X'' + R(\dot{\gamma}, X)\dot{\gamma} = 0,
\]
being the Jacobi fields. Note that \( R(t) \coloneqq R(\dot{\gamma}(t), \cdot)\dot{\gamma}(t) \) is a self-adjoint operator on \( T_{\gamma(t)}M \) due to the symmetries of the curvature tensor. By choosing a parallel orthonormal frame \( (E_1, \ldots, E_n) \) along \( \gamma \), the Jacobi fields \( X(t) = x^i(t)E_i(t) \) correspond to solutions of a homogenous second order linear system of ODEs. They are smooth and form a vector space \( \mathcal{J} \subset \Gamma \) of dimension \( 2n \), being uniquely defined by any prescribed pair of values \( (X(t), X'(t)) \) for \( t \in [a,b] \), in particular the initial conditions \( (X(a), X'(a)) \). It is also easily seen from (\ref{indexform2}) that the kernel of \( H_b \) as a symmetric bilinear form is exactly \( \mathcal{J} \cap \Gamma_0 \), that is, the set of Jacobi fields which vanish at the endpoints \( a \) and \( b \).

We say that \( t \in (a,b] \) is a \textit{conjugate value} to \( a \) along \( \gamma \), and that \( \gamma(t) \) is its respective \textit{conjugate point}, if there exists a non-zero Jacobi field \( X \) along \( \gamma|_{[a,t]} \) such that \( X(a) = X(t) = 0 \). This field can naturally be extended to a Jacobi field defined on the whole interval \( [a,b] \). Recall that the index of a symmetric bilinear form is the maximal dimension of a subspace on which it is negative definite. This dimension can, in principle, be infinite. We also know that the kernel of the index forms \( H_t \) for \( t \in [a,b] \) are the Jacobi fields that vanish at \( a \) and \( t \). The Morse Index Theorem, as stated previously, asserts that the index of \( H_b \) is equal to the number of conjugate values to \( a \) in \( (a,b) \) along \( \gamma \) counted with their multiplicity:

\begin{teo}[The Morse Index Theorem] \label{morseindex}
\[ \mathrm{ind}(H_b) = \sum_{\lambda<0}\mathrm{nul}(H_b-\lambda I)
  = \sum_{\lambda\in(\lambda_0,0)}\mathrm{nul}(H_b-\lambda I)
  = \sum_{t\in(a,b)}\mathrm{nul}(H_t). \]
\end{teo}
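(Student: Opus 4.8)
The plan is to prove the three equalities in turn, treating the first two as essentially spectral facts and reserving the symplectic machinery for the third, where Arnold's topological idea lives. For the first equality I would set up the eigenvalue problem $H_b(X,Y) = \lambda\langle X,Y\rangle_{L^2}$ and observe that $H_b$ is a positive form plus a lower-order perturbation: the principal part $\int_a^b \langle X',Y'\rangle\,ds$ is positive definite, while the curvature term $\int_a^b \langle R(\dot\gamma,X)\dot\gamma,Y\rangle\,ds$ is controlled by the $L^2$ norm, since $R(t)$ is a continuous family of self-adjoint operators on a compact interval. A min–max argument then identifies $\mathrm{ind}(H_b)$, the maximal dimension of a negative-definite subspace, with the number of negative eigenvalues counted with multiplicity, giving $\mathrm{ind}(H_b) = \sum_{\lambda<0}\nul(H_b-\lambda I)$ and in particular finiteness of the index. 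The second equality asserts that these eigenvalues are bounded below: from $H_b(X,X) \ge \int_a^b |X'|^2\,ds - C\int_a^b |X|^2\,ds \ge -C\,\|X\|_{L^2}^2$, with $C = \sup_t \|R(t)\|$, every eigenvalue satisfies $\lambda \ge -C$, so any $\lambda_0 < -C$ works and the two sums coincide.

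The third equality is the heart. I would pass to the phase space $\RR^{2n} = \RR^n \oplus \RR^n$ with coordinates $(X,X')$ in a parallel orthonormal frame, equipped with the canonical symplectic form $\omega\big((x_1,v_1),(x_2,v_2)\big) = \langle v_1,x_2\rangle - \langle x_1,v_2\rangle$. For each real $\lambda$ the modified Jacobi equation $-X'' + R(t)X = \lambda X$ is a linear Hamiltonian system whose flow from $a$ to $t$ is a symplectomorphism; hence the image $\ell(t,\lambda)$ of the vertical Lagrangian $\Lambda_a = \{(0,v) : v \in \RR^n\}$ is a smooth two-parameter family of Lagrangian subspaces. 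The key dictionary is that a nonzero $\lambda$-Jacobi field satisfies $X(a) = X(t) = 0$ precisely when $\ell(t,\lambda)\cap\Lambda_a \ne 0$, with
\[
\dim\big(\ell(t,\lambda)\cap\Lambda_a\big) = \nul(H_t - \lambda I),
\]
so that intersections of the family with the Maslov cycle $\Sigma = \{\, L : L\cap\Lambda_a \ne 0\,\}$ encode exactly the nullities on both sides of the desired identity.

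Now I would restrict $(t,\lambda)$ to the rectangle $[a,b]\times[\lambda_0,0]$ and read off the crossing data along its boundary loop. On the side $\lambda = 0$ the crossings are the interior conjugate values, contributing $\sum_{t\in(a,b)}\nul(H_t)$; on the side $t = b$ they are the eigenvalues of $H_b$ in $(\lambda_0,0)$, contributing $\sum_{\lambda\in(\lambda_0,0)}\nul(H_b-\lambda I)$; on the side $\lambda = \lambda_0$ there are no crossings, since for $\lambda_0 < -C$ the equation is uniformly hyperbolic and admits no conjugate values; and on the side $t = a$ the family is the constant $\Lambda_a$. Because the boundary of a rectangle is contractible, it is a homologically trivial $1$-cycle in the Lagrangian Grassmannian, so its total intersection number with $\Sigma$ vanishes. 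The crucial input turning this signed statement into the unsigned equality is the positivity of the Maslov crossing form: along the $t$-direction each crossing contributes $+\nul$ through the positive-definite form $v \mapsto \langle v,v\rangle$ on $\ell(t)\cap\Lambda_a$ (essentially $\|X'\|^2$ at the conjugate value), and along the $\lambda$-direction each crossing contributes with a definite sign governed by $\int |X|^2$. Consequently every crossing is counted with the same sign along each side, intersection numbers equal honest nullity sums, and the vanishing total reduces, after accounting for the orientations of the four sides, to $\sum_{t\in(a,b)}\nul(H_t) = \sum_{\lambda\in(\lambda_0,0)}\nul(H_b-\lambda I)$.

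The main obstacle I anticipate is precisely the bookkeeping of crossings at the degenerate side $t = a$ and at the corners. The family is identically $\Lambda_a$ when $t = a$, so this whole side lies inside $\Sigma$ and contributes no isolated crossings, but the corner $(a,0)$ is the birth of the conjugate-value curve and must be excluded, which is exactly why the theorem sums over the open interval $(a,b)$; likewise, if $b$ is conjugate to $a$ the corner $(b,0)$ lies on $\Sigma$, and one must either assume $b$ non-conjugate or perturb the loop slightly inward. Making the crossing-form computation rigorous — verifying nondegeneracy of the crossings (or reducing to it by a generic perturbation) and pinning down the signs so that the four boundary contributions combine with the correct orientation — is the technical core on which the whole topological argument rests.
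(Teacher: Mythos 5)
Your treatment of the second and third equalities is essentially the paper's argument: the same integration-by-parts lower bound on the eigenvalues, the same dictionary $\nul(H_t-\lambda I)=\dim(\ell(t,\lambda)\cap\Lambda_a)$, the same homologically trivial boundary loop of the rectangle $[a,b]\times[\lambda_0,0]$, and the same crucial monotonicity of the crossings in both the $t$- and $\lambda$-directions (which the paper establishes in coordinates by computing $S_1'(t_0)=-I-S_2S_2^{\mathsf{T}}$, the matrix form of your positive crossing form), together with the same inward perturbation of the degenerate side $t=a$ and of the corners at $(b,\cdot)$ that you correctly flag as the delicate bookkeeping. Where you genuinely diverge is the first equality. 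You propose to get $\ind(H_b)=\sum_{\lambda<0}\nul(H_b-\lambda I)$ up front by a min--max argument; this is legitimate, but it silently imports the spectral theory of the Sturm--Liouville operator (compact resolvent, discreteness of the spectrum, completeness of eigenfunctions) needed to make ``index equals number of negative eigenvalues'' precise in the infinite-dimensional form domain --- exactly the background the paper announces it wants to circumvent. The paper instead proves this equality \emph{last}: once the topological argument has shown that $\sum_{\lambda<0}\nul(H_b-\lambda I)$ is finite, it takes an arbitrary finite-dimensional subspace $V\subseteq\Gamma_0$ on which $H_b$ is negative definite, represents $H_b|_V$ by a self-adjoint operator $P$ with respect to the $L^2$ pairing, and diagonalizes $P$ to produce too many negative eigenvalues --- a purely finite-dimensional contradiction requiring no functional analysis. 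Your route is shorter if one is willing to quote elliptic spectral theory; the paper's buys self-containedness at the cost of reordering the proof so that the topological count comes first.
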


The first identity affirms that the index of \( H_b \) corresponds to the number of negative eigenvalues of the Sturm-Liouville problem
\begin{equation} \label{sturmliouville}
\begin{cases}
L_{\lambda}[X] = - X'' + (R-\lambda I)X = 0, \\
X(a) = X(b) = 0
\end{cases}
\end{equation}
counted with multiplicity, and the third identity represents the equivalence with the conjugate values with multiplicity. At first, we don't necessarily know whether the index, the number of negative eigenvalues, and the number of conjugate values are finite, but we can promptly prove the second identity:

\begin{teo}
The eigenvalues of the Sturm-Liouville problem
\[
\begin{cases}
L_{\lambda}[X] = - X'' + (R-\lambda I)X = 0, \\
X(a) = X(t) = 0
\end{cases}
\]
are bounded below by some \( \lambda_0 \) that does not depend on \( t \in (a,b] \).
\end{teo}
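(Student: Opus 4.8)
The plan is to exploit the self-adjointness of the problem through a Rayleigh-quotient estimate, which reduces the statement to a pointwise lower bound on the curvature term. Suppose $\lambda$ is an eigenvalue for some $t \in (a,b]$, witnessed by a nonzero field $X$ along $\gamma|_{[a,t]}$ with $-X'' + RX = \lambda X$ and $X(a) = X(t) = 0$. Taking the pointwise inner product with $X$ and integrating over $[a,t]$, an integration by parts whose boundary terms $\langle X', X\rangle\big|_a^t$ vanish by the boundary conditions yields
\[
\lambda \int_a^t \langle X, X\rangle\, ds = \int_a^t \langle X', X'\rangle\, ds + \int_a^t \langle R X, X\rangle\, ds.
\]
This is exactly the identity $\lambda\, \|X\|^2 = H_t(X,X)$ expressing the eigenvalue as a Rayleigh quotient of the index form, so everything comes down to estimating the right-hand side from below.

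First I would discard the manifestly nonnegative term $\int_a^t \langle X', X'\rangle\, ds$, and then bound the curvature term pointwise. Since $R(s)$ is self-adjoint on $T_{\gamma(s)}M$, one has $\langle R(s)X(s), X(s)\rangle \geq \rho(s)\,\langle X(s), X(s)\rangle$, where $\rho(s)$ denotes the smallest eigenvalue of $R(s)$. Because $\gamma$ is smooth on the \emph{compact} interval $[a,b]$ and, in a parallel orthonormal frame, $R$ becomes a continuous path of symmetric matrices, the function $\rho$ is continuous on $[a,b]$ and hence attains a finite minimum there. I would then set
\[
\lambda_0 \coloneqq \min_{s \in [a,b]} \rho(s),
\]
so that $\int_a^t \langle RX, X\rangle\, ds \geq \lambda_0 \int_a^t \langle X, X\rangle\, ds$.

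Combining these, and dividing by $\int_a^t \langle X, X\rangle\, ds > 0$ (legitimate since $X \not\equiv 0$), gives $\lambda \geq \lambda_0$. The crucial feature for the statement is that $\lambda_0$ is obtained by minimizing $\rho$ over the full interval $[a,b]$ rather than over $[a,t]$; since $[a,t] \subseteq [a,b]$, the same $\lambda_0$ works simultaneously for every $t \in (a,b]$, which is precisely the claimed uniformity. I do not anticipate a genuine obstacle here: the only points warranting care are that we bound an \emph{arbitrary} eigenvalue from below (so no existence, discreteness, or finiteness of the spectrum needs to be established at this stage) and that the infimum is taken over $[a,b]$ to secure independence of $t$.
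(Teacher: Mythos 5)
Your proposal is correct and follows essentially the same route as the paper: both rest on the identity $0=\int_a^t \|X'\|^2+\langle (R-\lambda I)X,X\rangle\,ds$ obtained by integration by parts, together with a uniform lower bound on the smallest eigenvalue of $R(s)$ over the compact interval $[a,b]$. The only cosmetic difference is that you phrase it directly as a Rayleigh-quotient bound $\lambda\geq\lambda_0$, whereas the paper argues contrapositively that for $\lambda$ sufficiently negative the only solution is $X\equiv 0$.
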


\begin{proof}
If \( X \) is a solution, then \( 0 = \langle L_{\lambda}[X], X \rangle \). We take the integral over \( [a,t] \), integrate by parts and use that \( X(a) = X(t) = 0 \) to obtain
\begin{gather*}
0 = \int_a^t \|X'\|^2 + \langle (R-\lambda I)X, X \rangle ds.
\end{gather*}

For \( -\lambda \) sufficiently large, the self-adjoint operator \( R(t) - \lambda I \) will have only positive eigenvalues for all \( t \in [a,b] \). If \( \mu \) is the infimum of the lowest eigenvalue of \( R(t) - \lambda I \) for \( t \in [a,b] \), then
\[
0 = \int_a^t \|X'\|^2 + \langle (R-\lambda I)X, X \rangle ds \geq \int_a^t \|X'\|^2 + \mu\langle X, X \rangle ds \geq 0,
\]
showing that \( X' \equiv 0 \) and, in turn, that \( X \equiv 0 \).
\end{proof}

We call \( X \in \Gamma \) a \textit{\(\lambda \)-Jacobi field} if it satisifies the equation
\begin{equation}
-X'' + RX = \lambda X, \label{thetajacobi}
\end{equation}
for a given real parameter \( \lambda \). Analogously to Jacobi fields, they form a real vector space \( \mathcal{J}_{\lambda} \subset \Gamma \) of dimension \( 2n \), given uniquely by the initial conditions \( (X(a), X'(a)) \) and \( \mathcal{J}_0 = \mathcal{J} \). It is readily verifiable that if \( X, Y \in \mathcal{J}_{\lambda} \), the expression
\begin{equation} \label{symplecticform}
\omega(X,Y) = -\langle X(t), Y'(t) \rangle + \langle X'(t), Y(t) \rangle
\end{equation}
does not depend on \( t \in [a,b] \) and defines a bilinear non-degenerate antisymmetric form on \( \mathcal{J}_{\lambda} \), that is, a symplectic form. Given \( t \in [a,b] \), the set
\[
\mathcal{J}_{\lambda}^t \coloneqq \{ X \in \mathcal{J}_{\lambda} \mid X(t) = 0 \}
\]
is a Lagrangian subspace of \( \mathcal{J}_{\lambda} \). So, in a certain sense, to study the \( \lambda \)-Jacobi fields that satisfy \( X(a) = X(t) = 0 \) is to study the intersections of specific Lagragian subspaces of a real symplectic vector space.

With a parallel orthonormal frame \( (E_1, \ldots, E_n) \) of smooth vector fields along \( \gamma \), the vector fields \( X \in \Gamma \) along the geodesic are represented by
\[
X(t) = x^i(t)E_i(t) \longmapsto (x^1(t), \ldots, x^n(t)),
\]
and to each vector field \( X \in \Gamma \) we associate the curve \( Y: [a,b] \to \RR^{2n} \)
\[
Y(t) = (x^1(t), \ldots, x^n(t), (x^1)'(t), \ldots, (x^n)'(t)).
\]

This is convenient because the \( \lambda \)-Jacobi equation is second order, and it becomes equivalent to the system of ODEs
\begin{equation} \label{secondorderode}
Y'(t) = A(t,\lambda)Y(t), \quad A(t,\lambda) = \begin{bmatrix}
0 & I \\
R(t) - \lambda I & 0
\end{bmatrix}
\end{equation}
where \( Y: [a,b] \to \RR^{2n} \) is a curve and \( R(t) \) is a curve of symmetric bilinear forms on \( \RR^n \). This implies that \( Y \) is of the form \( Y(t) = \begin{bmatrix}
X(t) & X'(t) \end{bmatrix}^{\mathsf{T}} \), and \( X(t) = (x^1(t), \ldots, x^n(t)) \) produces a \( \lambda \)-Jacobi field along \( \gamma \).
Also, since \( R(t) - \lambda I \) is self-adjoint, \( A \) is in the Lie algebra \( \mathfrak{sp}(2n, \RR) \) of linear symplectic maps of \( \RR^{2n} \), so that the flow preserves the canonical symplectic form on \( \RR^{2n} \).

Let \( \sigma = \{ 0 \} \times \RR^n \) be fixed as a Lagrangian subspace, corresponding the initial condition of the \( \lambda \)-Jacobi field being \( 0 \) at \( t = a \). If we consider \( \sigma_{\lambda}(t) \) to be the flow of \( \sigma \) at time \( t \) with respect to the system of ODEs above, that is,
\begin{equation}
\sigma_{\lambda}(t) = \{ Y(t) \in \RR^{2n} \mid Y(a) \in \sigma, \ Y'(t) = A(t,\lambda)Y(t) \}, \label{sigmalambda}
\end{equation}
which is also a Lagrangian subspace of \( \RR^{2n} \), then the intersection \( \sigma_{\lambda}(t) \cap \sigma \) corresponds exactly to the \( \lambda \)-Jacobi fields such that \( X(a) = X(t) = 0 \). In particular,
\[
\nul(H_t - \lambda I) = \dim(\sigma_{\lambda}(t) \cap \sigma),
\]
and the last equality in the Morse Index Theorem \ref{morseindex} is equivalent to
\begin{equation} \label{homologicalrectangle}
\sum_{t \in (a,b)} \dim(\sigma_0(t) \cap \sigma) - \sum_{\lambda \in (\lambda_0, 0)} \dim(\sigma_\lambda(b) \cap \sigma) = 0.
\end{equation}

By rephrashing the statement of the Morse Index Theorem in terms of intersections of Lagrangian subspaces with a given Lagrangian, we can use known topological methods to prove the equality. More specifically, we will view the above equality as the intersection number of a curve with a given subset of the moduli space of Lagrangians on \( \RR^{2n} \).

\section{The Lagrangian Grassmannian}

Let \( \RR^{2n} \cong \CC^{n} \) be equipped with its usual complex strucutre \( J \), inner product \( \langle \cdot, \cdot \rangle \) and symplectic form \( \omega \) as a \( 2n \)-dimensional real vector space, with coordinates
\[
(q,p) = (q^1, \ldots, q^n, p^1, \ldots, p^n) = q + ip.
\]

The Lagrangian Grassmannian \( \Lambda = \Lambda(n) \), that is, the set of all Lagrangian subspaces of \( \RR^{2n} \), is an embedded compact submanifold of the Grassmannian \( G_n(\RR^{2n}) \) of \( n \)-dimensional subspaces of \( \RR^{2n} \). For example, since every line passing through the origin in \( \RR^2 \cong \CC \) is Lagrangian, we have \( \Lambda(1) \cong \RP^2 \).

To identity a set of charts for \( \Lambda(n) \), let \( \sigma = \{0\}\times \RR^n \cong i\RR^n \) and consider the chart for \( G_n(\RR^{2n}) \) given by
\[
\begin{array}{cccc}
    \phi: & \mathrm{M}(n,\RR) & \longrightarrow & G^0_m(\sigma)  \\
     & S & \longmapsto & \lambda_S \coloneqq \{(q, Sq)\}
\end{array}
\]
which takes an \( n \times n \) real matrix \( S \) to its graph, an \( n \)-dimensional subspace of \( \RR^{2n} \) transversal to \( \sigma \). Note that transversality is an open condition in \( G_n(\RR^{2n}) \). We check that \( \lambda_S \) is Lagrangian if and only if \( S \) is symmetric, as
\begin{gather*}
\omega((q,Sq), (r,Sr)) = -\langle q, Sr \rangle + \langle Sq, r \rangle = \langle q, (S^{\mathsf{T}} - S)r \rangle = 0,
\end{gather*}
for all \( q, r \in \RR^n \), so \( S = S^{\mathsf{T}} \). This provides a chart for the set \( \Lambda^0(\sigma) \) of Lagrangians \( \lambda \) transversal to \( \sigma \), that is, such that \( \dim(\lambda \cap \sigma) = 0 \):
\begin{equation} \label{chartlagrangian}
\begin{array}{cccc}
    \varphi: & \Sym(n,\RR) & \longrightarrow & \Lambda^0(\sigma)  \\
     & S & \longmapsto & \lambda_S \coloneqq \{(q, Sq)\}.
\end{array}
\end{equation}

If \( K \subseteq \{ 1, \ldots, n \} \) is a set of indices, we also construct the unitary transformations \( J_{K} : \RR^{2n} \to \RR^{2n} \) given by
\[
J_K(q^i, p^i) = \begin{cases}
(-p^i, q^i), & \mbox{ if } i \in K; \\
(q^i, p^i), & \mbox{ if } i \notin K,
\end{cases}
\]
corresponding to multiplication by \( i \) on the \( K \) coordinates of \( \CC^n \), and the Lagrangian subspaces \( \sigma_K = J_K \sigma \), given by
\[
\sigma_K = \{ (q, p) \mid p^i = 0, \forall i \in K, \ q^j = 0, \forall j \notin K \}.
\]

Since \( J_K\sigma = \sigma_K \), we have that \( J_K \Lambda^0(\sigma) = \Lambda^0(\sigma_K) \), the set of Lagrangians transversal to \( \sigma_K \), and we construct the maps
\begin{equation} \label{chartsK}
\begin{array}{cccc}
    \varphi_K = J_K \varphi: & \Sym(n,\RR) & \longrightarrow & \Lambda^0(\sigma_K) \\
     & S & \longmapsto & J_K \lambda_S.
\end{array}
\end{equation}

\begin{lema} \label{chart}
If \( \lambda \in \Lambda \) is such that \( \dim(\lambda \cap \sigma) = k \), there exists a set of indices \( K \subseteq \{ 1, \ldots, n \} \) such that \( |K| = k \) and \( \lambda \) is transversal to \( \sigma_K \), that is, \( \lambda \in \Lambda^0(\sigma_K) \).
\end{lema}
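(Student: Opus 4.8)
The plan is to reformulate transversality to $\sigma_K$ as the invertibility of a single coordinate projection, and then to select $K$ by a basis-exchange argument, exploiting the fact that the Lagrangian condition forces the two subspaces naturally attached to $\lambda$ to be orthogonal complements.

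First I would encode $\sigma_K$ as a kernel. Writing points of $\RR^{2n}$ as $(q,p)$, define the linear map $\Pi_K\colon\RR^{2n}\to\RR^n$ by $\Pi_K(q,p)=\big((q^j)_{j\notin K},\,(p^i)_{i\in K}\big)$; reading off the defining equations of $\sigma_K$ shows $\sigma_K=\ker\Pi_K$. Hence $\lambda\cap\sigma_K=\ker(\Pi_K|_\lambda)$, and since $\dim\lambda=n$ equals the dimension of the target, transversality $\lambda\cap\sigma_K=\{0\}$ is equivalent to $\Pi_K|_\lambda$ being injective. Next I would extract the two pieces of linear data governing the problem: the subspace $P\coloneqq\{p\in\RR^n\mid(0,p)\in\lambda\}$, so that $\lambda\cap\sigma\cong P$ has dimension $k$, and the image $V\coloneqq\pi(\lambda)$ of the projection $\pi(q,p)=q$, whose kernel on $\lambda$ is exactly $\lambda\cap\sigma$, so $\dim V=n-k$.

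The key geometric step is to show $V=P^\perp$. This is where the Lagrangian hypothesis enters: for any $(q,p)\in\lambda$ and any $(0,p')\in\lambda\cap\sigma$ one has $0=\omega((q,p),(0,p'))=-\langle q,p'\rangle$, so $V\perp P$; the dimension count $\dim V=n-k=\dim P^\perp$ upgrades this to equality. With this in hand, I would choose $K$ purely by linear algebra: by the Steinitz exchange lemma I can complete a basis of the $(n-k)$-dimensional space $V$ to a basis of $\RR^n$ using $k$ of the standard vectors $e_j$, and I let $K$ be their index set, so that $\RR^n=V\oplus\RR^K$ with $\RR^K=\operatorname{span}\{e_j\mid j\in K\}$; equivalently $V\cap\RR^K=\{0\}$.

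It remains to verify injectivity of $\Pi_K|_\lambda$ for this $K$. If $(q,p)\in\lambda$ lies in $\ker\Pi_K$, then $q$ is supported on $K$ while $q\in V$, and the choice $V\cap\RR^K=\{0\}$ forces $q=0$; then $(0,p)\in\lambda\cap\sigma$ gives $p\in P$ with $p$ supported on $K^c$. Here I would invoke the duality $V\cap\RR^K=\{0\}\iff V+\RR^K=\RR^n\iff(V+\RR^K)^\perp=\{0\}$ together with $(V+\RR^K)^\perp=V^\perp\cap(\RR^K)^\perp=P\cap\RR^{K^c}$, so that the same $K$ automatically yields $P\cap\RR^{K^c}=\{0\}$, forcing $p=0$. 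The main obstacle is precisely this coupling: one must find a single index set $K$ turning both the $q$-coordinates on $V$ and the $p$-coordinates on $P$ into valid coordinate systems, and it is the identity $V=P^\perp$ that lets one combinatorial choice serve both purposes; everything else is routine.
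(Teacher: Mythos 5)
Your proof is correct and complete, and it reaches the conclusion by a genuinely different (though closely related) route. The paper chooses \(K\) by completing a basis of \(\lambda_0=\lambda\cap\sigma\) to a basis of \(\sigma\) with canonical vectors, so that \(\tau=\sigma_K\cap\sigma\) complements \(\lambda_0\) inside \(\sigma\); it then argues abstractly that \(\omega(\lambda\cap\sigma_K,\lambda_0\oplus\tau)=0\) because \(\omega\) vanishes on \(\lambda\supseteq\lambda_0\) and on \(\sigma_K\supseteq\tau\), hence \(\lambda\cap\sigma_K\) is \(\omega\)-orthogonal to the Lagrangian \(\sigma\) and therefore contained in \(\sigma\), where it dies by the choice of \(K\). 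You instead work with the two coordinate shadows \(V=\pi(\lambda)\) and \(P\cong\lambda\cap\sigma\), prove \(V=P^{\perp}\) (this is exactly where the Lagrangian hypothesis enters, via the same computation \(\omega((q,p),(0,p'))=-\langle q,p'\rangle\) that powers the paper's orthogonality step), pick \(K\) to complement \(V\), and verify injectivity of \(\Pi_K|_\lambda\) by hand, with the Euclidean duality \((V+\RR^K)^{\perp}=P\cap\RR^{K^{c}}\) showing that one combinatorial choice serves both the \(q\)- and \(p\)-blocks. The two selections of \(K\) are in fact equivalent under \(V=P^{\perp}\), so the difference is one of packaging: the paper's symplectic-orthogonality argument is shorter and coordinate-free, while your version makes explicit the relation \(V=P^{\perp}\) that also underlies the block description of the charts \(\varphi_K\) in the theorem that follows (where the vanishing of the \(K\times K\) block \(S_1\) detects the intersection with \(\sigma\)). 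A side remark in your favor: the paper's phrase ``we can choose \(K=I\)'' with \(|I|=n-k\) is inconsistent with \(|K|=k\) in the statement and with the subsequent requirement \(\lambda_0\oplus\tau=\sigma\); the intended set is the complement of \(I\), which is precisely the convention your argument uses.
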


\begin{proof}
If \( \lambda_0 = \lambda \cap \sigma \), We show first that \( \lambda_0 \cap \sigma_K = \{ 0 \} \). If \( \{ v_1, \ldots, v_k \} \) is a basis of \( \lambda_0 \), we complete it to a basis of \( \sigma \) with canonical vectors \( e_{i_1}, \ldots, e_{i_{n-k}} \). Being \( I = \{ i_1, \ldots, i_{n-k} \} \) this set of indices, we can choose \( K = I \), with \( \sigma_K \) satisfying the transversality condition with \( \lambda \) in \( \sigma \).

Now let \( \tau = \sigma_K \cap \sigma \). As \( \tau \cap \lambda_0 = \{0\} \), we have the direct sum \( \lambda_0 \oplus \tau = \sigma \). Considering the symplectic form \( \omega \), we also have that \( \omega(\lambda, \lambda_0) = 0 \) and \( \omega(\sigma_K, \tau) = 0 \), since \( \lambda_0 \subseteq \lambda \) and \( \tau \subseteq \sigma_K \) and \( \lambda, \sigma_K \) are Lagrangian subspaces. Then \( \omega(\lambda \cap \sigma_K, \sigma) = 0 \), which implies that \( \lambda \cap \sigma_K \subseteq \sigma \); but since they are transversal in \( \sigma \), it must be that \( \sigma_K \cap \lambda = \{0\} \).
\end{proof}

\begin{teo}
The set \( \Lambda^k(\sigma) = \{ \lambda \in \Lambda \mid \dim(\lambda \cap \sigma) = k \} \) is covered by the \( \binom{n}{k} \) charts \( \varphi_K \), and on each such chart, the coordinates \( S = \varphi_K^{-1}\lambda \) for \( \Lambda^k(\sigma) \) are given by \( S_{\mu \nu} = 0 \), \( \forall \mu, \nu \in K \).
\end{teo}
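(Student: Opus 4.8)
The plan is to prove the two assertions in turn: the covering of $\Lambda^k(\sigma)$ by the $\binom{n}{k}$ charts $\varphi_K$ with $|K|=k$, and then the description of $\Lambda^k(\sigma)$ inside each such chart. The first is essentially bookkeeping on top of the previous lemma; the second is a direct linear-algebra computation of the intersection $\lambda\cap\sigma$ in coordinates.

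For the covering, I would appeal to Lemma \ref{chart} directly. Given $\lambda\in\Lambda^k(\sigma)$, so that $\dim(\lambda\cap\sigma)=k$, that lemma furnishes a subset $K\subseteq\{1,\ldots,n\}$ with $|K|=k$ and $\lambda\in\Lambda^0(\sigma_K)$, i.e. $\lambda$ lies in the image of $\varphi_K$. Since there are precisely $\binom{n}{k}$ subsets of $\{1,\ldots,n\}$ of cardinality $k$, these charts cover $\Lambda^k(\sigma)$.

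For the coordinate description I would fix $K$ with $|K|=k$ and write $\lambda=\varphi_K(S)=J_K\lambda_S$ with $\lambda_S=\{(q,Sq)\}$, then compute $\lambda\cap\sigma$ explicitly. Since $J_K$ is invertible and $q\mapsto(q,Sq)$ is injective, each $v\in\lambda$ equals $J_K(q,Sq)$ for a unique $q$, and $v\in\sigma$ exactly when the first $n$ (position) coordinates of $v$ vanish. By the definition of $J_K$, which interchanges the $i$-th position and momentum coordinates (with a sign) precisely for $i\in K$, those position coordinates read $-(Sq)^i$ for $i\in K$ and $q^i$ for $i\notin K$. Setting them to zero forces $q$ to be supported on $K$ and to satisfy $(Sq)^i=0$ for all $i\in K$; writing $S_K=(S_{\mu\nu})_{\mu,\nu\in K}$ for the principal submatrix indexed by $K$ and $q_K$ for the restriction of $q$ to $K$, this is exactly the system $S_K q_K=0$. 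Hence $\dim(\lambda\cap\sigma)=\dim\ker S_K=k-\operatorname{rank} S_K$, so $\dim(\lambda\cap\sigma)\le k$ throughout the chart, with equality if and only if $\operatorname{rank} S_K=0$, that is $S_{\mu\nu}=0$ for all $\mu,\nu\in K$. This is the asserted description.

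The only genuine work lies in the middle step: tracking how $J_K$ acts on coordinates so as to identify correctly the position part of $J_K(q,Sq)$, and then recognizing the vanishing of that part as the linear system $S_K q_K=0$. Once the action of $J_K$ is spelled out, the rank formula—and with it both assertions—follows immediately.
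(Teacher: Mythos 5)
Your proposal is correct and follows essentially the same route as the paper: the covering comes straight from Lemma \ref{chart}, and the coordinate description comes from computing $\lambda\cap\sigma$ inside the chart $\varphi_K$ and identifying it with the kernel of the principal $k\times k$ submatrix of $S$ indexed by $K$ (the paper does this by writing $J_K\lambda_S$ as an explicit column space after relabeling so that $K=\{1,\dots,k\}$, which is the same computation). Your rank formula $\dim(\lambda\cap\sigma)=k-\operatorname{rank}S_K$ packages both implications at once and also recovers the paper's subsequent remark that $\lambda\in\Lambda^{l}(\sigma)\iff\dim\ker S_1=l$.
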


\begin{proof}
Without loss of generality, we may assume that \( K = \{ 1, \ldots, k \} \) by relabeling the coordinate axes on \( \RR^{n} \). The Lagrangian \( \lambda_S = \{(q, Sq)\} \) is realized as the column space of the matrix
\( \begin{bmatrix} I & S \end{bmatrix}^{\mathsf{T}} \),
and therefore \( J_K\lambda_S = \varphi_K(S) \) is the column space of
\begin{equation} \label{columnspace}
J_K\begin{bmatrix}
I \\
S
\end{bmatrix} = J_K\begin{bmatrix}
I_{k \times k} & 0 \\
0 & I_{(n-k) \times (n-k)}\\
S_1 & S_2 \\
S_3 & S_4
\end{bmatrix} = \begin{bmatrix}
-S_1 & -S_2 \\
0 & I \\
I & 0 \\
S_3 & S_4 \\
\end{bmatrix}.
\end{equation}

Since the column vectors are linearly independent, the column space of the last \( n-k \) vectors always has trivial intersection with \( \sigma \). If \( S_1 = 0 \), then the first \( k \) column vectors form a basis for the intersection \( \sigma \cap \lambda \), and conversely, if \( \dim(\lambda \cap \sigma) = k \), it must be the case that \( S_1 = 0 \).
\end{proof}

More generally, we see that, for \( l \leq k \) and \( \lambda \in \Lambda^0(\sigma_K) \),
\[
\lambda \in \Lambda^{l}(\sigma) \iff \dim \ker S_1 = l.
\]

This shows that every \( \lambda \in \Lambda \) belongs to some chart \( \varphi_K(\Sym(n,\RR)) \) for some \( K \subseteq \{ 1, \ldots, n \} \), so they cover \( \Lambda \). It is easy to see that they are compatible, indeed showing that these maps form an atlas for an embedded submanifold of the Grassmannian of \( n \)-planes of \( \RR^{2n} \). We also conclude that the subsets \( \Lambda^k(\sigma) \) form embedded submanifolds of codimension \( k(k+1)/2 \).

\section{The Intersection Number}

From the original question of understanding curves \( \lambda(t) \) of lagrangian subspaces and when does \( \dim(\lambda(t) \cap \sigma) > 0 \), we are naturally led to consider intersections of \( \lambda(t) \) with the set \( \Lambda^{\geq 1}(\sigma) = \bigcup_{k \geq 1}\Lambda^k(\sigma) \). We will show that the intersection number of an oriented curve with this subset \cite{guillemin}*{Chapter 3} is well defined, as \( \Lambda^{\geq 1}(\sigma) \) is a two-sided cycle of codimension \( 1 \), and we relate this index of intersection to a canonical cohomology class with integer coefficients in order to provide explicit calculations.

The unitary group \( \mathrm{U}(n) \) acts smoothly and transitively on the Lagrangian subspaces of \( \RR^{2n} \), and the isotropy subgroup of \( \RR^n \times \{0\} \cong \RR^n \subset \CC^n \) is the orthogonal group \( \mathrm{O}(n) \). This implies that the Lagrangian Grasmmannian can be realized as the homogeneous manifold \( \mathrm{U}(n)/\mathrm{O}(n) \).

The map \( \det^2 : ~\mathrm{U}(n) \to S^1 \) is well defined on the quotient, resulting on the induced map
\begin{equation} \label{Det^2}
\Det^2: \mathrm{U}(n)/\mathrm{O}(n) \longrightarrow S^1.
\end{equation}

\begin{prop} \label{formula}
For \( \lambda_S \in \Lambda^0(\sigma) \), we have that
\begin{equation} \label{formulaforDet^2}
\Det^2 \lambda_S = \det\dfrac{I + iS}{I - iS}.
\end{equation}
\end{prop}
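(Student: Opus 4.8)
The plan is to unwind the definition of \( \Det^2 \) through the homogeneous-space description \( \Lambda \cong \mathrm{U}(n)/\mathrm{O}(n) \) and then reduce the statement to a determinant identity. Recall that \( \Det^2(\lambda) = (\det U)^2 \) for any \( U \in \mathrm{U}(n) \) carrying the reference Lagrangian \( \RR^n \times \{0\} \) onto \( \lambda \); this is well defined precisely because the isotropy group \( \mathrm{O}(n) \) consists of matrices of determinant \( \pm 1 \), which square to \( 1 \). So the task is to exhibit, for \( \lambda_S \), a convenient unitary \( U \) with \( U(\RR^n \times \{0\}) = \lambda_S \) and to evaluate \( (\det U)^2 \).

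First I would rewrite \( \lambda_S \) in the complex coordinates \( (q,p) = q + ip \). Under this identification the reference Lagrangian \( \RR^n \times \{0\} \) is the real subspace \( \RR^n \subset \CC^n \), while a point \( (q, Sq) \) becomes \( q + iSq = (I + iS)q \). Hence \( \lambda_S = A\,\RR^n \), where \( A \coloneqq I + iS \) is regarded as a complex \( n \times n \) matrix. Since \( S \) is real symmetric its eigenvalues are real, so those of \( I + iS \) have the form \( 1 + i\mu \neq 0 \), and therefore \( A \) is invertible.

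Next, because \( \lambda_S \) is Lagrangian there exists \( U \in \mathrm{U}(n) \) with \( U\RR^n = \lambda_S = A\RR^n \). Then \( R \coloneqq U^{-1}A \) maps \( \RR^n \) into \( \RR^n \), so \( R \in \mathrm{GL}(n,\RR) \) is a real matrix, and we obtain a ``unitary times real'' factorization \( A = UR \). The key algebraic step is to eliminate \( R \): conjugating gives \( \bar A = \bar U\, R \) since \( R \) is real, and for a unitary matrix one has \( \bar U^{-1} = U^{\mathsf{T}} \) (from \( U^{*} = U^{-1} \), i.e. \( \bar U^{\mathsf{T}} = U^{-1} \)). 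Thus \( A\bar A^{-1} = U R R^{-1} \bar U^{-1} = U U^{\mathsf{T}} \), and taking determinants yields \( (\det U)^2 = \det(U U^{\mathsf{T}}) = \det(A\bar A^{-1}) = \det A / \det \bar A \).

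Finally I would substitute \( A = I + iS \) and \( \bar A = I - iS \). As these two matrices are polynomials in \( S \) they commute, so \( \det A / \det \bar A = \det\bigl((I+iS)(I-iS)^{-1}\bigr) = \det\frac{I+iS}{I-iS} \), which is the claimed formula. I expect the only delicate points to be the bookkeeping of the complex identification \( (q,p) \leftrightarrow q + ip \) and the justification of \( \bar U^{-1} = U^{\mathsf{T}} \); once these are in place the computation is essentially forced. It is also worth recording the consistency check with well-definedness: replacing \( U \) by \( UO \) with \( O \in \mathrm{O}(n) \) leaves \( U U^{\mathsf{T}} \) unchanged, since \( O O^{\mathsf{T}} = I \), confirming that the value does not depend on the chosen representative.
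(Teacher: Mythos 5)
Your proof is correct, and it starts from the same place as the paper: identifying \( \lambda_S = (I+iS)\,\RR^n \) under \( (q,p)\leftrightarrow q+ip \) and reducing the problem to computing \( (\det U)^2 \) for a unitary \( U \) carrying \( \RR^n \) onto \( \lambda_S \). The difference is in how you extract that square. The paper writes down the unitary explicitly, namely \( U = (I+iS)(I+S^2)^{-1/2} \), observing that the real positive factor \( (I+S^2)^{1/2} \) preserves \( \RR^n \), and then computes \( \det^2 U = \det\bigl((I+iS)^2/(I+S^2)\bigr) = \det\bigl((I+iS)/(I-iS)\bigr) \); this is the polar decomposition of \( I+iS \) made concrete, at the small cost of checking that the displayed matrix really is unitary. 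You instead invoke only the existence of some factorization \( I+iS = UR \) with \( U \) unitary and \( R \) real invertible (which follows from transitivity of \( \mathrm{U}(n) \) on \( \Lambda \)), and then use the identity \( A\bar A^{-1} = UU^{\mathsf{T}} \) to get \( (\det U)^2 = \det A/\det\bar A \) without ever naming \( U \). Your route avoids the square root and makes the \( \mathrm{O}(n) \)-invariance manifest (replacing \( U \) by \( UO \) leaves \( UU^{\mathsf{T}} \) fixed), while the paper's explicit representative is slightly more self-contained and is reused implicitly when the formula is evaluated along concrete curves later in the text. Both are complete; there is no gap in your argument.
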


\begin{proof}
With the identification \( \RR^{2n} \cong \CC^n \), the map \( I + iS \) takes the Lagrangian subspace \( \RR^n = \{(q,0)\} \) to \( \lambda_S = \{(q, Sq)\} \). It may not be unitary, but \( (I + iS)/\sqrt{I + S^2} \) is; and since \( \sqrt{I + S^2} \) preserves \( \RR^n \), this map also takes \( \RR^n \) to \( \lambda_S \). Therefore
\[
\Det^2 \lambda_S = \det{}^2\left(\dfrac{I +i S}{\sqrt{I + S^2}}\right) = \det \dfrac{(I + iS)^2}{I + S^2} = \det \dfrac{I + iS}{I - iS}.
\]
\end{proof}

Consider also the set \( \mathrm{S}\Lambda(n) \) of all Lagrangian subspaces \( \lambda \) such that \( \Det^2\lambda = 1 \). Then \( \mathrm{SU}(n) \) acts transitively on \( \mathrm{S}\Lambda(n) \) with stabilizer \( \mathrm{SO}(n) \), so that \( \mathrm{S}\Lambda(n) \cong \mathrm{SU}(n)/\mathrm{SO}(n) \).

The map \( \Det^2 \) in fact induces an isomorphism \( \pi_1(\Lambda) \cong \pi_1(S^1) \) between the fundamental groups. This can be seen through the exact homotopy sequences of the six fibrations of the following commutative diagram:
\[
\xymatrix{
\mathrm{SO}(n) \ar[r] \ar[d] & \mathrm{O}(n) \ar[r]^{\det} \ar[d] & S^0 \ar[d] \\
\mathrm{SU}(n) \ar[r] \ar[d] & \mathrm{U}(n) \ar[r]^{\det} \ar[d] & S^1 \ar[d]^{z \mapsto z^2} \\
\mathrm{S}\Lambda(n) \ar[r] & \Lambda(n) \ar[r]^{\Det^2} & S^1
}
\]
More explicitly, \( \mathrm{S}\Lambda(n) \) and \( \Lambda(n) \) are both connected, being continuous images of \( \mathrm{SU}(n) \) and \( \mathrm{U}(n) \), the long exact sequence
\[
\cdots \to \pi_1(\mathrm{SO}(n)) \to \pi_1(\mathrm{SU}(n)) \to \pi_1(\mathrm{S}\Lambda(n)) \to \pi_0(\mathrm{SO}(n)) \to \cdots
\]
gives us \( \pi_1(\mathrm{S}\Lambda(n)) = 0 \), and the long exact sequence
\[
\cdots \to \pi_1(\mathrm{S}\Lambda(n)) \to \pi_1(\Lambda(n)) \to \pi_1(S^1) \to \pi_0(\mathrm{S}\Lambda(n)) \to \cdots
\]
gives us the aforementioned isomorphism.

Recall that \( \deg: \pi_1(S^1) \to \ZZ \) is an isomorphism, and in this case, the Hurewicz map \( \pi_1(\Lambda) \to H_1(\Lambda; \ZZ) \) given by the abelianization of the fundamental group is also an isomorphism. This allows us to conclude that
\[
H^1(\Lambda ; \ZZ) \cong \Hom(H_1(\Lambda; \ZZ), \ZZ) \cong H_1(\Lambda; \ZZ) \cong \ZZ,
\]
as \( \ZZ \) is abelian. Consider \( \alpha \in H^1(\Lambda ; \ZZ) \cong \Hom(\pi_1(\Lambda), \ZZ) \) to be the cohomology class given by
\begin{equation} \label{maslovindex}
\alpha(\gamma) = \deg(\Det^2 \circ \gamma),
\end{equation}
where \( \gamma \) is a closed curve given up to homotopy. Then \( \alpha \) coincides with the pullback of the angle \( 1 \)-form \( d\theta \) on \( S^1 \) by \( \Det^2 \), where \( \alpha \) is evaluated on smooth closed curves belonging to the same homotopy class. In certain contexts \( \alpha \) is referred to as the Maslov index of the Lagrangian Grassmannian \( \Lambda \). It is readily verifiable that \( \alpha \) is a generator for \( H^1(\Lambda ; \ZZ) \) through the following diagram of isomorphisms:
\[
\xymatrix{
\pi_1(\Lambda) \ar[r]^{\det^2_*} \ar[dr]^{\alpha} & \pi_1(S^1) \ar[d]^{\deg} \\
& \ZZ
}
\]

Fixing \( \sigma = \{0\} \times \RR^n \) as before, we shall prove that \( \alpha \) is equal to the index of intersection of an oriented closed curve with \( \Lambda^{\geq 1}(\sigma) \), that is, the set of Lagrangians which have non-trivial intersection with \( \sigma \).

Note that \( \Lambda(n) \) can be regarded as an algebraic manifold, so that the closure \( \overline{\Lambda^1(\sigma)} \), being equal to the union \( \bigcup_{k = 1}^n \Lambda^k(\sigma) = \Lambda^{\geq 1}(\sigma) \), determines an algebraic submanifold of codimension \( 1 \). Since the higher strata of \( \Lambda^{\geq 1}(\sigma) \) correspond to the boundary \( \partial \Lambda^{\geq 1}(\sigma) = \bigcup_{k = 2}^n \Lambda^k(\sigma) = \Lambda^{\geq 2}(\sigma) \), this singularity is of codimension \( 2(2+1)/2 = 3 \) in \( \Lambda(n) \), which means that the homological boundary of \( \overline{\Lambda^1(\sigma)} \) is \( 0 \). Consequently, \( \overline{\Lambda^1(\sigma)} \) is a cycle of codimension \( 1 \).

\begin{lema}
\( \overline{\Lambda^1(\sigma)} \) is a two-sided cycle in \( \Lambda(n) \).
\end{lema}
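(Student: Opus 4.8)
The plan is to produce a \emph{global coorientation} of the Maslov cycle, since a codimension-one cycle is two-sided exactly when its normal bundle is orientable (equivalently trivial). I would first coorient the smooth top stratum \( \Lambda^1(\sigma) \) in a canonical, chart-free way, and then argue that the singular strata are too thin to obstruct this.

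The essential tool is the intrinsic identification \( T_\lambda \Lambda \cong \Sym^2(\lambda^*) \), the space of symmetric quadratic forms on \( \lambda \). Given a smooth path \( \lambda(t) \) with \( \lambda(0) = \lambda \), and \( v \in \lambda \), choose any smooth extension \( v(t) \in \lambda(t) \) with \( v(0) = v \) and set \( Q(v) = \omega(v, \dot v(0)) \). Differentiating the Lagrangian relation \( \omega(v(t), w(t)) = 0 \) shows that \( Q \) is symmetric and independent of the chosen extension, so the tangent vector \( \dot\lambda(0) \) is faithfully recorded by a form \( Q \in \Sym^2(\lambda^*) \); a dimension count against \( \dim\Lambda = n(n+1)/2 \) shows this is an isomorphism. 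Under it, the tangent space to the stratum is \( T_\lambda \Lambda^k(\sigma) \cong \{ Q \in \Sym^2(\lambda^*) : Q|_{\lambda \cap \sigma} = 0 \} \): for \( v \in \lambda \cap \sigma \) one may take the extension \( v(t) \) inside the (smoothly varying) intersection \( \lambda(t) \cap \sigma \subset \sigma \), whence \( \dot v(0) \in \sigma \) and \( Q(v) = \omega(v, \dot v(0)) = 0 \) because \( \sigma \) is Lagrangian.

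For the smooth stratum \( k = 1 \), the intersection \( \ell_\lambda \coloneqq \lambda \cap \sigma \) is a line, and restriction of forms gives a canonical identification of the normal line \( T_\lambda\Lambda / T_\lambda \Lambda^1(\sigma) \) with \( \Sym^2(\ell_\lambda^*) \). Here lies the crux: quadratic forms on a one-dimensional space constitute a one-dimensional vector space carrying a \emph{canonical} orientation, namely the ray of positive-definite forms, since rescaling a generator of \( \ell_\lambda^* \) acts on forms by positive squares. This positivity cone depends on no choices, so the resulting coorientation of each normal line varies continuously and trivializes the normal bundle of \( \Lambda^1(\sigma) \); the cycle is two-sided on its smooth part. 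Concretely, in the chart \( \varphi_K \) with \( |K| = 1 \), say \( K = \{i\} \), the theorem identifies \( \Lambda^1(\sigma) \) locally with \( \{ S_{ii} = 0 \} \) and the coorientation with \( dS_{ii} \), and the invariant description guarantees that these local choices agree on overlaps.

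It remains to extend the coorientation across the singular locus. The closure \( \overline{\Lambda^1(\sigma)} = \Lambda^{\geq 1}(\sigma) \) differs from the smooth stratum only by \( \Lambda^{\geq 2}(\sigma) \), which has codimension \( 2(2+1)/2 = 3 \); this is precisely the fact already used to conclude that the homological boundary of \( \overline{\Lambda^1(\sigma)} \) vanishes. A set of codimension three cannot disconnect the normal slices, so the coorientation of the smooth part extends over the whole cycle, and \( \overline{\Lambda^1(\sigma)} \) is a two-sided cycle, yielding a well-defined signed intersection number with transverse oriented curves. The main obstacle is the honest verification of the tangent identifications \( T_\lambda \Lambda \cong \Sym^2(\lambda^*) \) and \( T_\lambda \Lambda^k(\sigma) \cong \{ Q : Q|_{\lambda \cap \sigma} = 0 \} \); once these are in place, the canonical positivity orientation of forms on a line delivers global consistency for free.
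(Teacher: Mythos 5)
Your proof is correct, but it takes a genuinely different route from the paper's. The paper exhibits an explicit transversal vector field along \( \Lambda^1(\sigma) \), namely the infinitesimal generator of the flow \( \lambda \mapsto e^{it}\lambda \): working in a chart \( \varphi_K \) with \( |K|=1 \) it computes \( S'(0) = I + S(0)S(0)^{\mathsf{T}} \), whose \( \kappa\kappa \)-entry is \( \geq 1 \), so the flow crosses the stratum positively and furnishes the coorientation. You instead build a canonical, chart-free coorientation via the identification \( T_\lambda\Lambda \cong \Sym^2(\lambda^*) \), \( Q(v)=\omega(v,\dot v(0)) \), identifying the normal line to \( \Lambda^1(\sigma) \) with the quadratic forms on the line \( \lambda\cap\sigma \) and orienting it by the positive-definite ray. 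The two coorientations agree, since the generator of \( e^{it}\lambda \) corresponds to the form \( Q(v)=\omega(v,iv)=\|v\|^2 \), which is positive definite. What the paper's computation buys is reusability: the same formula (\ref{flow}) is invoked again to evaluate \( \Ind \) on the model loop \( e^{it}\lambda_S \) and to fix the sign conventions used throughout Sections 4--6, so the explicit chart calculation is not wasted effort. What your argument buys is that consistency of the local coorientations across chart overlaps is automatic rather than something to be checked, and the identification \( T_\lambda\Lambda\cong\Sym^2(\lambda^*) \) together with \( T_\lambda\Lambda^k(\sigma)\cong\{Q : Q|_{\lambda\cap\sigma}=0\} \) (which you should close by the codimension count \( k(k+1)/2 \) on both sides, turning your one inclusion into an equality) would also give a cleaner conceptual proof of Theorem \ref{indexofintersection}, since the crossing form of the Jacobi flow is again definite on \( \lambda\cap\sigma \). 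Your final step --- that the codimension-\( 3 \) singular stratum does not obstruct two-sidedness or the definition of the intersection number --- is stated loosely but is exactly the same appeal the paper makes just before the lemma, so it is at the paper's own level of rigor.
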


\begin{proof}
We must show that there exists a non-vanishing continuous vector field along \( \Lambda^1(\sigma) \) transversal to it. The flow \( \lambda \mapsto e^{it}\lambda \) for \( t \in \RR \) on \( \Lambda \) produces an infinitesimal generator which, along \( \Lambda^1(\sigma) \), will be the desired vector field. On a chart \( \varphi_K(\Sym(n,\RR)) \) we have
\[
\lambda(t) = J_K \lambda_{S(t)} = e^{it} J_K\lambda_{S(0)} \implies \lambda_{S(t)} = e^{it}\lambda_{S(0)}.
\]
This means that the column vectors of the \( 2n \times n \) matrices
\[
\begin{bmatrix}
I \\
S(t)
\end{bmatrix}, \quad  \begin{bmatrix}
\cos t I & -\sin t I \\
\sin t I & \cos t I
\end{bmatrix} \begin{bmatrix}
I \\
S(0)
\end{bmatrix}
\]
span the same subspace, hence there exists a curve \( G(t) \in \mathrm{GL}(n,\RR) \) such that
\[
\begin{bmatrix}
I \\
S(t)
\end{bmatrix} = \begin{bmatrix}
\cos t I & -\sin t I \\
\sin t I & \cos t I
\end{bmatrix} \begin{bmatrix}
I \\
S(0)
\end{bmatrix} G(t).
\]
This in turn implies
\begin{equation} \label{flow}
S(t) = \dfrac{\sin t I + \cos t S(0)}{\cos t I - \sin t S(0)},
\end{equation}
so that \( S'(0) = I + S(0)^2 = I + S(0)S(0)^t \). If \( \lambda(0) \in \Lambda^1(\sigma) \) and \( K = \{ \kappa \} \), then \( S'_{\kappa \kappa}(0) \geq 1 \), so that the flow is indeed transversal to \( \Lambda^1(\sigma) \). Hence \( \Lambda^{\geq 1}(\sigma) \) is two-sided and a positive orientation can be given by the flow \( e^{it}\lambda \).
\end{proof}

With this, we can properly define the index of intersection \( \Ind(\gamma) \) of an oriented curve \( \gamma: [a,b] \to \Lambda \) with the cycle \( \Lambda^{\geq 1}(\sigma) \) when \( \gamma(a), \gamma(b) \in \Lambda^0(\sigma) \), which is invariant up to homotopy of \( \gamma \) fixing its endpoints. This is because we can complete \( \gamma \) to a closed curve by joining \( \gamma(b) \) to \( \gamma(a) \) through any path in \( \Lambda^0(\sigma) \), since it is a simply connected open set.

We show that the index of intersection and \( \alpha \) coincide:

\begin{prop}
\( \Ind(\gamma) = \alpha(\gamma) \) for all \( [\gamma] \in \pi_1(\Lambda) \).
\end{prop}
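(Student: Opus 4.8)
The plan is to exploit the fact that both $\alpha$ and $\Ind$ are group homomorphisms $\pi_1(\Lambda) \to \ZZ$, together with the fact, already established above, that $\alpha$ is an \emph{isomorphism} onto $\ZZ$. That $\Ind$ is a homomorphism follows from its construction: after perturbing a loop to be transversal to $\overline{\Lambda^1(\sigma)}$ (so that it meets only the top stratum $\Lambda^1(\sigma)$, the higher strata having codimension $\geq 3$), $\Ind$ is a signed count of crossings; such counts add under concatenation of loops and are invariant under homotopy because the cycle is boundaryless. Since $\alpha\colon \pi_1(\Lambda) \xrightarrow{\sim} \ZZ$, every homomorphism $\pi_1(\Lambda)\to\ZZ$ is an integer multiple of $\alpha$, so $\Ind = c\,\alpha$ for a unique $c \in \ZZ$. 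It therefore suffices to evaluate both sides on one generator of $\pi_1(\Lambda)$ and check they agree.

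As a generator I would take the loop $\gamma_0\colon [0,\pi]\to\Lambda(n)$ defined by $\gamma_0(t) = \mathrm{span}_{\RR}\{e^{it}e_1, e_2, \ldots, e_n\}$, where the product is taken in $\CC^n$; this is a genuine loop since $e^{i\pi}e_1 = -e_1$ spans the same real line as $e_1$. The unitary map $\mathrm{diag}(e^{it}, 1, \ldots, 1)$ carries $\RR^n$ to $\gamma_0(t)$, so $\Det^2\gamma_0(t) = e^{2it}$, which winds once around $S^1$ as $t$ runs over $[0,\pi]$. Hence $\alpha(\gamma_0) = \deg(\Det^2\circ\gamma_0) = 1$, confirming that $\gamma_0$ is a generator.

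It remains to compute $\Ind(\gamma_0)$. The intersection $\gamma_0(t)\cap\sigma$ is nonzero exactly when the $q$-component of $e^{it}e_1$ vanishes, i.e. when $\cos t = 0$; thus $t = \pi/2$ is the only crossing, and there $\gamma_0(\pi/2) = \mathrm{span}_{\RR}\{ie_1, e_2, \ldots, e_n\} \in \Lambda^1(\sigma)$. To read off the sign I would pass to the chart $\varphi_{\{1\}}$, in which $\overline{\Lambda^1(\sigma)}$ is cut out by $S_{11} = 0$ and the positive side, with the co-orientation fixed by orienting the cycle via the flow $e^{it}\lambda$, is $S_{11} > 0$. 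A direct computation of $S(t) = \varphi_{\{1\}}^{-1}(\gamma_0(t))$ for $t$ near $\pi/2$ gives $S_{11}(t) = -\cot t$, so $S_{11}(\pi/2) = 0$ and $S_{11}'(\pi/2) = \csc^2(\pi/2) = 1 > 0$. Thus $\gamma_0$ crosses the cycle transversally in the positive direction, giving $\Ind(\gamma_0) = 1 = \alpha(\gamma_0)$; hence $c = 1$ and $\Ind = \alpha$ on all of $\pi_1(\Lambda)$.

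The only delicate point, and the step I expect to be the main obstacle, is the sign bookkeeping in this last computation: one must be careful to compare the co-orientation of $\overline{\Lambda^1(\sigma)}$ fixed by the flow $e^{it}\lambda$ (for which $S_{\kappa\kappa}'(0) \geq 1 > 0$ in the preceding lemma) against the \emph{same} chart and orientation conventions used to define $\Ind$, so that the crossing at $t=\pi/2$ is genuinely counted as $+1$ and not $-1$. Everything else is formal once one knows that $\alpha$ is an isomorphism and that $\Ind$ is a well-defined homotopy-invariant homomorphism.
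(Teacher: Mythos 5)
Your proof is correct and follows essentially the same strategy as the paper: both arguments observe that $\Ind$ and $\alpha$ are homotopy-invariant homomorphisms to $\ZZ$ and then compare them on a single explicit loop of the form ``multiply (some) coordinates by $e^{it}$''. The only difference is the test curve --- the paper uses $t\mapsto e^{it}\lambda_S$ for a generic symmetric $S$ with $n$ distinct nonzero eigenvalues, obtaining $n=n$, while you use the generator $t\mapsto\mathrm{span}\{e^{it}e_1,e_2,\dots,e_n\}$ with a single positive crossing at $t=\pi/2$, obtaining $1=1$; your sign computation $S_{11}(t)=-\cot t$ in the chart $\varphi_{\{1\}}$ is consistent with the co-orientation fixed by the flow in the two-sidedness lemma, so the crossing is indeed counted as $+1$.
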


\begin{proof}
It suffices to prove the equality for a specific closed curve, since \( \alpha \) is a generator for \( H^1(\Lambda; \ZZ) \). We take \( \gamma \) to be the closed curve \( e^{it}\lambda \) for \( 0 \leq t \leq \pi \), where \( \lambda \in \Lambda \) is to be chosen. For almost all \( \lambda \in \Lambda \), the curve \( e^{it}\lambda \) does not pass through \( \overline{\Lambda^2(\sigma)} \). So we may take such \( \lambda = \lambda_S \in \Lambda^0(\sigma) \) and such that \( S \) has nonzero and pairwise distinct eigenvalues. Then \( S(0) = S(\pi) = S \), and at the points where \( e^{it}\lambda \) intersects \( \Lambda^1(\sigma) \), it will do so transversally and positively. By (\ref{flow}), these points of intersection correspond to the values of \( t \in (0, \pi) \) for which
\[
\det(\cos tI - \sin tS) = (-\sin t)^n \det(S - \cot t I) = 0.
\]

For \( t \in (0, \pi) \), \( \cot t \) parametrizes \( \RR \) once, so the determinant vanishes exactly for the \( n \) distinct real eigenvalues of \( S \). This means that \( \Ind(\gamma) = n \). As for \( \alpha(\gamma) \), we have
\[
\Det^2 e^{it}\lambda_S = e^{2nit}\Det^2 \lambda_S,
\]
which winds around the circle \( n \) times for \( 0 \leq t \leq \pi \). So \( \alpha(\gamma) = n = \Ind(\gamma) \), and \( \alpha = \Ind \) for general closed curves.
\end{proof}

\section{On The Symplectic Flow}

We return to the curves on \( \Lambda(n) \) given by a symplectic flow of the form of the Jacobi equation, so that we may calculate their intersection numbers with \( \Lambda^{\geq 1}(\sigma) \). It is important to note that these intersections will not in general be transversal, that is, ocurring transversally at the principal stratum \( \Lambda^1(\sigma) \); and even though we can perturb the curve to a homotopic one that does so, we may not necessarily know the information on the multiplicities of the intersections. Fortunately, these intersections will still be non-degenerate in a precise sense, where we can adequately describe their contributions to the index of intersection.

For \( t \in [a,b] \) and any Lagrangian subspace \( \tau \in \Lambda \), let
\begin{equation}
\lambda(t) \coloneqq \{ v(t) \in \RR^{2n} \mid v(a) \in \tau, \ v'(t) = A(t)v(t) \},   
\end{equation}
where
\[
A(t) = \begin{bmatrix}
0 & I \\
R(t) & 0
\end{bmatrix} \in \mathfrak{sp}(2n), \ R(t) \in \Sym(n,\RR).
\]

\begin{lema}
\( \lambda(t) \) intersects \( \Lambda^{\geq 1}(\sigma) \) finitely many times.
\end{lema}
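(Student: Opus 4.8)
The plan is to transfer the problem to the linearized picture and show that every intersection time is \emph{isolated}; since the set of intersection times is closed (it is the preimage of the closed set $\Lambda^{\geq 1}(\sigma) = \overline{\Lambda^1(\sigma)}$ under the continuous curve $t \mapsto \lambda(t)$) and $[a,b]$ is compact, a closed set of isolated points must be finite. Let $\Phi(t)$ be the fundamental solution of $v' = A(t)v$ with $\Phi(a) = \Id$, so that $\Phi(t)$ is symplectic and $\lambda(t) = \Phi(t)\tau$. Choosing a frame $Z(t) = \Phi(t)Z_0$ whose columns span $\lambda(t)$ and writing $Z(t) = \begin{bmatrix} X(t) & X'(t)\end{bmatrix}^{\mathsf T}$, a vector of $\lambda(t)$ lies in $\sigma = \{0\}\times\RR^n$ exactly when its position block vanishes; hence $\dim(\lambda(t)\cap\sigma) = \dim\ker X(t)$, and each null vector has the form $v = (0,\eta)$ with $\eta \neq 0$, corresponding to a Jacobi field $\xi$ with $\xi(t) = 0$ and $\xi'(t) = \eta$.

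So fix an intersection time $t_0$, put $L_0 = \lambda(t_0)\cap\sigma$ and $k = \dim L_0 \geq 1$. By Lemma \ref{chart} there is a set of indices $K$ with $|K| = k$ and $\lambda(t_0) \in \Lambda^0(\sigma_K)$; as this is an open condition we may write $\lambda(t) = \varphi_K(S(t))$ for a smooth curve $S(t) \in \Sym(n,\RR)$ on a neighbourhood of $t_0$, and by the description of $\Lambda^k(\sigma)$ in these charts we have $\dim(\lambda(t)\cap\sigma) = \dim\ker S_1(t)$, where $S_1$ is the $K\times K$ block; in particular $S_1(t_0) = 0$. The key reduction is then that $t_0$ is isolated as soon as $\dot S_1(t_0)$ is invertible: for then $S_1(t) = (t-t_0)\,\dot S_1(t_0) + o(|t-t_0|)$ is invertible for $0 < |t-t_0| < \varepsilon$, so $\dim(\lambda(t)\cap\sigma) = \dim\ker S_1(t) = 0$ there.

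The heart of the matter is therefore to compute $\dot S_1(t_0)$ and to see that it is definite. I would use the canonical identification of $T_{\lambda(t_0)}\Lambda$ with the space of quadratic forms on $\lambda(t_0)$, under which a velocity sends $v \in \lambda(t_0)$ to $\omega(v,\dot v(t_0))$, where $v(t)\in\lambda(t)$ is any smooth extension of $v$; a routine check in the chart $\varphi_K$ shows that the restriction of this form to $L_0$ agrees, up to sign, with the quadratic form $\dot S_1(t_0)$. Now for $v = (0,\eta)\in L_0$ take the extension given by the Jacobi field, $v(t) = (\xi(t),\xi'(t))$ with $\xi(t_0) = 0$ and $\xi'(t_0) = \eta$; then $\dot v(t_0) = (\xi'(t_0),\xi''(t_0)) = (\eta,\,R(t_0)\xi(t_0)) = (\eta,0)$, and with $\omega((q_1,p_1),(q_2,p_2)) = -\langle q_1,p_2\rangle + \langle p_1,q_2\rangle$ we get
\[
\omega(v,\dot v(t_0)) = \omega\bigl((0,\eta),(\eta,0)\bigr) = \|\eta\|^2 > 0 .
\]
Thus the crossing form is positive definite on all of $L_0$, so $\dot S_1(t_0)$ is a definite $k\times k$ matrix and hence invertible, which is exactly what the previous paragraph requires.

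The step I expect to be the main obstacle is precisely this definiteness, and it is worth stressing why it cannot be replaced by a softer argument. Since $R(t)$ is only smooth, $\det X(t)$ is merely a smooth function and could a priori vanish on a set with accumulation points, so one cannot invoke isolated zeros of an analytic function. What rescues us is the special form of $A(t)$, namely that its upper-right block is the identity: this is exactly what makes $\dot v(t_0)$ land in the position direction $(\eta,0)$ and forces $\omega(v,\dot v(t_0)) = \|\eta\|^2$ to be strictly positive. In other words, the symplectic geometry of the Jacobi flow makes the curve meet $\Lambda^{\geq 1}(\sigma)$ monotonically at each crossing, and it is this monotonicity — not analyticity — that renders the crossings isolated and therefore finite in number. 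As anticipated at the start of this section, the definiteness persists at the higher strata $k > 1$, where the curve need not be transverse to the codimension-one cycle, yet the intersection is still non-degenerate.
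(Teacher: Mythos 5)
Your proof is correct, and it reaches the same quantitative conclusion as the paper by a more invariant route. Both arguments share the skeleton: localize in the chart $\varphi_K$ supplied by Lemma \ref{chart}, show that the $K\times K$ block satisfies $S_1(t_0)=0$ with $\dot S_1(t_0)$ definite, deduce that crossings are isolated, and finish by compactness. Where you differ is in how the definiteness is obtained. The paper writes the moving frame as $L(t)=\Phi(t)L(t_0)G(t)$, differentiates at $t_0$, and extracts $S_1'(t_0)=-I-S_2(t_0)S_2(t_0)^{\mathsf T}$ by block-matrix algebra. You instead compute the crossing form $v\mapsto\omega(v,\dot v(t_0))$ on $L_0=\lambda(t_0)\cap\sigma$ using the flow extension, where the special shape of $A(t)$ forces $\dot v(t_0)=(\eta,0)$ and hence $\omega(v,\dot v(t_0))=\|\eta\|^2>0$; you then transfer definiteness to $\dot S_1(t_0)$ via the chart. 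That transfer is indeed a routine check and the two computations are consistent: parametrizing $L_0$ by $x\in\RR^k$ via $v=L(t_0)(x,0)^{\mathsf T}=(0,(x,S_3x))$, the crossing form equals $-\langle \dot S_1(t_0)x,x\rangle=\|x\|^2+\|S_2^{\mathsf T}x\|^2=\|\eta\|^2$, matching the paper's formula exactly. Your approach (essentially the Robbin--Salamon crossing form) is cleaner, makes the well-definedness independent of the frame manifest, and isolates the reason the argument works — monotonicity of the Jacobi flow through the Maslov cycle rather than analyticity; the paper's block computation is more pedestrian but hands you for free the sign of every eigenvalue of $S_1(t)$ on either side of $t_0$, which it reuses verbatim in the proof of Theorem \ref{indexofintersection}. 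Your version recovers the same information, since $S_1(t_0)=0$ together with $\dot S_1(t_0)$ negative definite gives $S_1(t)$ positive definite for $t<t_0$ and negative definite for $t>t_0$ near the crossing. The only presentational nit is that ``agrees up to sign with $\dot S_1(t_0)$'' should specify the linear isomorphism $\RR^k\to L_0$ under which the comparison is made, but nothing in the logic depends on more than its being an isomorphism.
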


\begin{proof}
Suppose that, at a time \( t_0 \in [a,b] \), we have that \( \dim(\lambda(t_0) \cap \sigma) = k \), that is, \( \lambda(t_0) \in \Lambda^k(\sigma) \). Then by lemma \ref{chart} there exists \( K \subseteq \{ 1, \ldots, n \} \), which we may assume to be \( \{1, \ldots, k \} \), such that \( \lambda(t_0) \in \Lambda^0(\sigma_K) \). Since it is an open set, we may assume also that for \( t \) close to \( t_0 \) we have \( \lambda(t) \) contained in the chart \( \Lambda^0(\sigma_K) \). We define the curve of matrices \( S(t) = \varphi_K^{-1}\lambda(t) \in \Sym(n,\RR) \), where explicitly \(
\lambda(t) = J_K \lambda_{S(t)} = \colsp L(t) \) for
\[
L(t) = \begin{bmatrix}
-S_1(t) & -S_2(t) \\
0 & I \\
I & 0 \\
S_3(t) & S_4(t) \\
\end{bmatrix},
\]
and such that \( S_1(t_0) = 0_{k \times k} \). Let \( \Phi(t) \) be the fundamental matrix for the flow \( v'(t) = A(t)v(t) \) such that
\begin{equation}
\begin{cases}
\Phi'(t) = A(t) \Phi(t), \\
\Phi(t_0) = I_{2n \times 2n};
\end{cases}
\end{equation}
then
\[
\gamma(t) = \colsp (L(t)) = \colsp \left( \Phi(t) L(t_0) \right) = \Phi(t)\gamma(t_0).
\]

Since \( L(t) \) and \( \Phi(t)L(t_0) \) have the same column space, there exists a curve \( G(t) \) in \( \mathrm{GL}_n(\RR) \) such that \( G(t_0) = I_{n \times n} \) and
\begin{equation}
L(t) = \Phi(t)L(t_0)G(t).
\end{equation}
We differentiate the expression above at \( t = t_0 \):
\begin{align}
L'(t_0) & = \Phi'(t_0)L(t_0)G(t_0) + \Phi(t_0)L(t_0)G'(t_0) \nonumber \\
& = A(t_0)L(t_0) + L(t_0)G'(t_0). \label{columnspaces}
\end{align}

By also considering
\begin{gather*}
R(t) = \begin{bmatrix}
R_1(t)_{k \times k} & R_2(t)_{k \times (n-k)} \\
R_3(t)_{(n-k) \times k} & R_4(t)_{(n-k) \times (n-k)}
\end{bmatrix}, \\
G(t) = \begin{bmatrix}
G_1(t)_{k \times k} & G_2(t)_{k \times (n-k)} \\
G_3(t)_{(n-k) \times k} & G_4(t)_{(n-k) \times (n-k)}
\end{bmatrix},
\end{gather*}
we may expand the equation (\ref{columnspaces}) in matrix form:
\[
\begin{bmatrix}
-S_1' & -S_2' \\
0 & 0 \\
S_3' & S_4' \\
0 & 0
\end{bmatrix} = \begin{bmatrix}
I - S_2G_3' & -S_2G_4' \\
S_3 + G_3' & S_4 + G_4' \\
G_1' & -R_1S_2 + R_2 + G_2' \\
S_3G_1' + S_4G_3' & -R_3S_2 + R_4 + S_3G_2' + S_4G_4'
\end{bmatrix},
\]
where all the matrices above are evaluated at \( t_0 \). Then \( G_3'(t_0) = -S_3(t_0) = -S_2(t_0)^{\mathsf{T}} \) and
\begin{equation*}
S_1'(t_0) = - I - S_2(t_0)S_2(t_0)^{\mathsf{T}},
\end{equation*}
which is negative definite, and all of its eigenvalues are \( \leq -1 \). Since \( S_1(t_0) = 0 \), this implies that for \( t \) close to \( t_0 \) and \( t < t_0 \), the eigenvalues of \( S_1(t) \) are all positive, and for \( t > t_0 \), they are all negative. In particular, for \( t \neq t_0 \), \( S_1(t) \) has trivial kernel, so \( \lambda(t) \in \Lambda^0(\sigma) \). Thus intersections of the curve with \( \Lambda^{\geq 1}(\sigma) \) are discrete, and since the interval is compact, they are finite.
\end{proof}

\begin{teo} \label{indexofintersection}
On the conditions of the previous lemma, if we also assume that \( \lambda(a), \lambda(b) \in \Lambda^0(\sigma) \), then
\begin{equation}
\Ind(\lambda) = \sum_{t \in (a,b)} \dim(\lambda(t) \cap \sigma),
\end{equation}
where the sum above has finitely many non-zero terms.
\end{teo}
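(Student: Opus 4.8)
The plan is to localize the homotopy-invariant intersection number and evaluate it crossing by crossing. By the previous lemma the curve \( \lambda(t) \) meets \( \Lambda^{\geq 1}(\sigma) \) at only finitely many parameters \( t_0 \in (a,b) \), and the hypothesis \( \lambda(a), \lambda(b) \in \Lambda^0(\sigma) \) guarantees that none of these occur at the endpoints, so that \( \Ind(\lambda) \) is well defined. Between consecutive crossings the curve lies in the open set \( \Lambda^0(\sigma) \), so I would partition \( [a,b] \) into subintervals each containing a single crossing time \( t_0 \) and use additivity of the intersection number under concatenation to write \( \Ind(\lambda) = \sum_{t_0} \Ind(\lambda|_{[t_0 - \epsilon, t_0 + \epsilon]}) \). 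This reduces the theorem to showing that the local contribution at a crossing with \( \lambda(t_0) \in \Lambda^k(\sigma) \) equals \( k = \dim(\lambda(t_0) \cap \sigma) \).

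First I would reuse the local model from the proof of the previous lemma. Choosing \( K \) with \( |K| = k \) so that \( \lambda(t_0) \in \Lambda^0(\sigma_K) \), the curve is represented in the chart \( \varphi_K \) by a path \( S(t) \in \Sym(n,\RR) \) whose \( k \times k \) block satisfies \( S_1(t_0) = 0 \) and \( S_1'(t_0) = -I - S_2(t_0)S_2(t_0)^{\mathsf{T}} \), which is negative definite. In this chart the cycle is the hypersurface \( \{ \det S_1 = 0 \} \), and by that same computation \( S_1(t) \) is positive definite just below \( t_0 \) and negative definite just above it, so \( \lambda(t) \) returns to \( \Lambda^0(\sigma) \) immediately on either side. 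The crossing is in general non-transversal, occurring at the deep stratum \( \Lambda^k(\sigma) \), so the index cannot be read off directly; instead I would invoke the homotopy invariance of \( \Ind \) to perturb \( S_1(t) \) to \( \mathrm{diag}(\mu_1(t), \ldots, \mu_k(t)) \), splitting the degenerate crossing into \( k \) simple transversal intersections with the principal stratum \( \Lambda^1(\sigma) \) at distinct times.

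It then remains to count these simple crossings with sign. Because \( S_1'(t_0) \) is negative definite, every eigenvalue branch \( \mu_j(t) \) is strictly decreasing through its zero, so all \( k \) simple crossings occur in the same direction and carry a common local sign; the local contribution is thus \( \pm k \). To fix the common sign I would compare the velocity of the curve against the co-orientation of \( \Lambda^{\geq 1}(\sigma) \) fixed in the two-sidedness lemma, where the generator of the flow \( e^{it}\lambda \) is represented in the chart by \( S'(0) = I + S(0)^2 \). With the orientation conventions in force this comparison fixes the contribution as \( +k \), whence the local index equals \( \dim(\lambda(t_0) \cap \sigma) \); summing over all crossing times yields the stated identity, the finiteness of the sum being exactly the previous lemma.

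The main obstacle is precisely this multiplicity-and-sign bookkeeping at the higher strata: a priori the curve touches \( \Lambda^{\geq 1}(\sigma) \) tangentially to the principal stratum, so neither the multiplicity \( k \) nor the orientation of the crossing is visible from transversality alone, and getting the sign right relative to the \( e^{it} \)-orientation is the genuinely delicate step. What rescues the computation is the definiteness of \( S_1'(t_0) \) established earlier — it simultaneously forces the degenerate crossing to split into \emph{exactly} \( k \) simple ones and forces those to share a single sign, collapsing the signed count to the clean value \( k \). As an independent check one may instead compute \( \deg(\Det^2 \circ \lambda) \) directly by tracking the passage of the eigenvalues of \( S_1(t) \) through zero, which should recover the same contribution.
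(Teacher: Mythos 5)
Your architecture coincides with the paper's up to the decisive step: both localize \( \Ind(\lambda) \) to the finitely many crossing times and reduce to showing that a crossing at \( \Lambda^k(\sigma) \) contributes \( k \), starting from the normal form \( S_1(t_0)=0 \), \( S_1'(t_0)=-I-S_2(t_0)S_2(t_0)^{\mathsf{T}} \) of the previous lemma. You then evaluate the local contribution by perturbing the arc so that the degenerate tangency splits into \( k \) simple transversal crossings of the principal stratum, all of one sign because the crossing form \( S_1'(t_0) \) is definite. The paper instead homotopes the local arc, rel endpoints inside \( \Lambda^0(\sigma_K) \), onto the explicit model arc \( \eta(t)=\varphi_K(T(t)) \) with \( T(t)=\mathrm{diag}(-tI_k,0) \), closes it into a loop through \( \Lambda^0(\sigma) \) by \( \mu(t)=\varphi(T(t)) \), and evaluates \( \alpha=\deg(\Det^2\circ\,\cdot\,) \) on that loop via Proposition \ref{formula}. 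Your route is the standard crossing-form argument and is viable; one small repair is that a path of symmetric matrices cannot in general be continuously diagonalized, so you should first homotope \( S_1(t) \) to its linearization \( (t-t_0)S_1'(t_0) \) and only then diagonalize the constant matrix \( S_1'(t_0) \).

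The genuine gap is the sign, exactly the step you flag as delicate and then assert rather than prove. The co-orientation of \( \Lambda^{\geq1}(\sigma) \) is defined by the flow \( e^{it} \), which in \emph{every} chart \( \varphi_{K'} \) satisfies \( S'(0)=I+S(0)^2\succ0 \); hence at a simple crossing the positive direction is the one in which the vanishing eigenvalue of the relevant block \emph{increases} through \( 0 \). Your arc has \( S_1'(t_0)\prec0 \), so each of the \( k \) simple crossings produced by your perturbation is traversed with that eigenvalue \emph{decreasing}: the comparison you invoke yields \( -k \), not \( +k \). (A secondary point: the two-sidedness lemma states the co-orientation in a chart \( \varphi_{K'} \) with \( |K'|=1 \), whereas your crossings are described in \( \varphi_K \) with \( |K|=k \), so you also need the observation that \( S'=I+S^2 \) holds in every such chart.) To land on the stated sign you must either reverse a convention (co-orient by \( e^{-it} \), or flip the sign of \( \alpha \)) or supply an actual argument; as written the step does not close. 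The same subtlety lurks in the paper's own computation, where \( \bigl(\tfrac{1-it}{1+it}\bigr)^k=e^{-2ik\arctan t} \) winds clockwise, and it is ultimately harmless for the Index Theorem because the final argument only uses that the \( t \)-crossings and the \( \lambda \)-crossings carry the same sign; but a proof of this theorem as stated has to pin the sign down, and yours does not.
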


\begin{proof}
It suffices to show that at each intersection of \( \lambda \) with \( \Lambda^{\geq 1}(\sigma) \), the contribution to the intersection number is given by the \( k \) such that the intersection is on the stratum \( \Lambda^{k}(\sigma) \). As before, if \( \lambda(t_0) \in \Lambda^{k}(\sigma) \), then locally \( \lambda(t) \in \Lambda^0(\sigma_K) \) for \( K \subseteq \{ 1, \ldots, n \} \), which we may assume to be \( \{ 1, \ldots, k \} \), and \( S(t) = \varphi_K^{-1}(\lambda(t)) \).

Since, for \( t_1 < t_0 \), \( S(t_1) \in \Sym(n,\RR) \cap \varphi_K^{-1}(\Lambda^0(\sigma)) \) and all its eigenvalues are positive, we can find a path joining \( S(t_1) \) to the matrix
\[
E_1 = \begin{bmatrix}
I & 0 \\
0 & 0
\end{bmatrix}
\]
that avoids \( \varphi_K^{-1}(\Lambda^{\geq 1}(\sigma)) \). This is done considering a diagonalization \( S_1(t_1) = MDM^{-1} \), where \( M \in \mathrm{O}(n) \) and \( D \) is diagonal with positive eigenvalues, and simultaneously deforming \( D \) to the above matrix and \( M \) either to the identity or to a simple reflection about the \( x^1 \) axis, whether the determinant of \( M \) is \( 1 \) or \( -1 \). All the other entries are taken to be \( 0 \) through a linear homotopy. Similarly, for \( t_2 > t_0 \), we can find a path joining \( S(t_2) \) to the matrix
\[
E_2 = -E_1 = \begin{bmatrix}
-I & 0 \\
0 & 0
\end{bmatrix}
\]
which avoids \( \varphi_K^{-1}(\Lambda^{\geq 1}(\sigma)) \).

Now we consider the curve \( \eta: [-1,1] \to \Lambda(n) \) given by \( \eta(t) = \varphi_K(T(t)) \), where
\begin{equation}  \label{parametrization}
T(t) = \begin{bmatrix}
-tI & 0 \\
0 & 0
\end{bmatrix}.
\end{equation}
This curve will intersect \( \Lambda^{\geq 1}(\sigma) \) only at the value \( t = 0 \). Furthermore, the Lagrangian subspaces at the endpoints of the curve are
\[
\eta(-1) = J_K \colsp \begin{bmatrix}
I & 0 \\
0 & I \\
I & 0 \\
0 & 0
\end{bmatrix} = \colsp \begin{bmatrix}
-I & 0 \\
0 & I \\
I & 0 \\
0 & 0
\end{bmatrix} = \colsp \begin{bmatrix}
I & 0 \\
0 & I \\
-I & 0 \\
0 & 0
\end{bmatrix} = \varphi(E_2),
\]
and analogously \( \eta(1) = \varphi(E_1) \). We connect the Lagrangian subspaces \( \eta(1) \) and \( \eta(-1) \) through the same parametrization in (\ref{parametrization}),
but on \( \varphi^{-1}(\Lambda^0(\sigma)) \), a different chart. The curve \( \mu(t) = \varphi(T(t)) \) defined on \( [-1,1] \) is such that \( \mu(-1) = \eta(1) \), \( \mu(1) = \eta(-1) \) and \( \mu \) is contained in \( \Lambda^0(\sigma) \). Concatenating both curves at their endpoints, we form a simple closed curve that intersects \( \Lambda^{\geq 1}(\sigma) \) at only one point, and we can use the parametrizations to calculate its index of intersection. Explicitly, by the formula in proposition \ref{formula},
\[
\Det^2 \eta(t) = \Det^2 J_K \lambda_{T(t)} = i^{2k} \left(\dfrac{1 - it}{1 + it}\right)^k,
\]
which winds around the circle \( k/2 \) times for \( t \in [-1,1] \), and
\[
\Det^2 \mu(t) = \Det^2 \lambda_{T(t)} = \left(\dfrac{1-it}{1+it}\right)^k,
\]
which further winds around the circle \( k/2 \) times. Then \( \alpha(\mu*\eta) = k \), and it coincides with the index of intersection with \( \Lambda^{\geq 1}(\sigma) \). More importantly, it does not depend on how we complete the curve \( \eta \) through \( \Lambda^0(\sigma) \), and is invariant under homotopies.

Since \( \Lambda^0(\sigma_K) \) is simply connected, we can find a homotopy between the curve \( \lambda(t) \) on \( [t_1, t_2] \) and \( \eta \) which preserves the intersection number, as the path joining \( \lambda(t_i) \) to \( \eta(E_i) \) does not intersect \( \Lambda^{\geq 1}(\sigma) \). Finally, this implies that each intersection point \( \lambda(t_0) \in \Lambda^k(\sigma) \) contributes exactly \( k \) to the intersection number of the curve \( \lambda(t) \) with \( \Lambda^{\geq 1}(\sigma) \).
\end{proof}

\section{Final Steps}

We return to the Lagrangian subspaces \( \sigma_{\lambda}(t) \) as defined in (\ref{sigmalambda}) in order to prove the equality (\ref{homologicalrectangle}). We may consider \( \lambda \in [\lambda_0, 0] \) for some \( \lambda_0 < 0 \), where for \( \mu \leq \lambda_0 \), we have \( \sigma_{\mu}(t) \cap \sigma = \{0\} \). The map \( \sigma_{\lambda}(t) \) is smooth on both variables, and the image of \( \sigma : [a,b] \times [\lambda_0, 0] \) forms a homological rectangle:

\begin{center}

\tikzset{every picture/.style={line width=0.75pt}} 

\begin{tikzpicture}[x=0.7pt,y=0.7pt,yscale=-1,xscale=1]

\draw    (226.5,75.6) .. controls (218.24,88.22) and (214.01,99.63) .. (212.93,109.75) .. controls (204.29,190.33) and (394.24,189.86) .. (328.5,72.6) ;
\draw [shift={(328.5,72.6)}, rotate = 60.72] [color={rgb, 255:red, 0; green, 0; blue, 0 }  ][line width=0.75]    (10.93,-3.29) .. controls (6.95,-1.4) and (3.31,-0.3) .. (0,0) .. controls (3.31,0.3) and (6.95,1.4) .. (10.93,3.29)   ;
\draw  [draw opacity=0][dash pattern={on 0.84pt off 2.51pt}] (189,41) -- (369.5,41) -- (369.5,182.6) -- (189,182.6) -- cycle ; \draw  [dash pattern={on 0.84pt off 2.51pt}] (189,41) -- (189,182.6)(209,41) -- (209,182.6)(229,41) -- (229,182.6)(249,41) -- (249,182.6)(269,41) -- (269,182.6)(289,41) -- (289,182.6)(309,41) -- (309,182.6)(329,41) -- (329,182.6)(349,41) -- (349,182.6)(369,41) -- (369,182.6) ; \draw  [dash pattern={on 0.84pt off 2.51pt}] (189,41) -- (369.5,41)(189,61) -- (369.5,61)(189,81) -- (369.5,81)(189,101) -- (369.5,101)(189,121) -- (369.5,121)(189,141) -- (369.5,141)(189,161) -- (369.5,161)(189,181) -- (369.5,181) ; \draw  [dash pattern={on 0.84pt off 2.51pt}]  ;
\draw   (189,41) -- (369.5,41) -- (369.5,181) -- (189,181) -- cycle ;
\draw    (172.5,71.6) -- (173.48,151.6) ;
\draw [shift={(173.5,153.6)}, rotate = 269.3] [color={rgb, 255:red, 0; green, 0; blue, 0 }  ][line width=0.75]    (10.93,-3.29) .. controls (6.95,-1.4) and (3.31,-0.3) .. (0,0) .. controls (3.31,0.3) and (6.95,1.4) .. (10.93,3.29)   ;
\draw    (221.5,27.6) -- (332.5,27.6) ;
\draw [shift={(334.5,27.6)}, rotate = 180] [color={rgb, 255:red, 0; green, 0; blue, 0 }  ][line width=0.75]    (10.93,-3.29) .. controls (6.95,-1.4) and (3.31,-0.3) .. (0,0) .. controls (3.31,0.3) and (6.95,1.4) .. (10.93,3.29)   ;
\draw  [fill={rgb, 255:red, 0; green, 0; blue, 0 }  ,fill opacity=1 ] (186.7,41) .. controls (186.7,39.73) and (187.73,38.7) .. (189,38.7) .. controls (190.27,38.7) and (191.3,39.73) .. (191.3,41) .. controls (191.3,42.27) and (190.27,43.3) .. (189,43.3) .. controls (187.73,43.3) and (186.7,42.27) .. (186.7,41) -- cycle ;
\draw  [fill={rgb, 255:red, 0; green, 0; blue, 0 }  ,fill opacity=1 ] (186.7,181) .. controls (186.7,179.73) and (187.73,178.7) .. (189,178.7) .. controls (190.27,178.7) and (191.3,179.73) .. (191.3,181) .. controls (191.3,182.27) and (190.27,183.3) .. (189,183.3) .. controls (187.73,183.3) and (186.7,182.27) .. (186.7,181) -- cycle ;
\draw  [fill={rgb, 255:red, 0; green, 0; blue, 0 }  ,fill opacity=1 ] (367.2,41) .. controls (367.2,39.73) and (368.23,38.7) .. (369.5,38.7) .. controls (370.77,38.7) and (371.8,39.73) .. (371.8,41) .. controls (371.8,42.27) and (370.77,43.3) .. (369.5,43.3) .. controls (368.23,43.3) and (367.2,42.27) .. (367.2,41) -- cycle ;
\draw  [fill={rgb, 255:red, 0; green, 0; blue, 0 }  ,fill opacity=1 ] (367.2,181) .. controls (367.2,179.73) and (368.23,178.7) .. (369.5,178.7) .. controls (370.77,178.7) and (371.8,179.73) .. (371.8,181) .. controls (371.8,182.27) and (370.77,183.3) .. (369.5,183.3) .. controls (368.23,183.3) and (367.2,182.27) .. (367.2,181) -- cycle ;

\draw (189,13.4) node [anchor=north west][inner sep=0.75pt]    {$a$};
\draw (367,13.4) node [anchor=north west][inner sep=0.75pt]    {$b$};
\draw (276,3.4) node [anchor=north west][inner sep=0.75pt]    {$t$};
\draw (162,27.4) node [anchor=north west][inner sep=0.75pt]    {$0$};
\draw (136,101.4) node [anchor=north west][inner sep=0.75pt]    {$\lambda $};
\draw (161,176.4) node [anchor=north west][inner sep=0.75pt]    {$\lambda _{0}$};

\end{tikzpicture}
\end{center}

In principle, we know how to calculate the intersection number of the horizontal and vertical sides of this rectangle with \( \Lambda^{\geq 1}(\sigma) \), since they are given by a symplectic flow, varying either the term \( R(t) \) or \( -\lambda I \) in (\ref{secondorderode}). However, \( \sigma_0(a) \notin \Lambda^{0}(\sigma) \), and possibly \( \sigma_0(b) \notin \Lambda^{0}(\sigma) \). To proceed, we consider curves homotopic to these edges for which we can apply theorem \ref{indexofintersection}.

Note that \( \sigma_{\lambda}(a) = \sigma \) for all \( \lambda \in [\lambda_0, 0] \). Since \( \sigma \in \Lambda^n(\sigma) \), we have that \( \sigma_{\lambda}(t) \in \Lambda^{0}(\sigma_N) \) for all \( \lambda \) and for \( t \) close to \( a \), where \( N = \{1, \ldots, n\} \). Then \( \sigma_{\lambda}(t) = J_N \lambda_{S_{\lambda(t)}} = J\lambda_{S_{\lambda(t)}} \), and for \( t \) closer to \( a \), the matrices \( S_{\lambda}(t) \) all have negative eigenvalues. This means that we can find \( a' \in (a,b) \) such that, for \( t \in (a,a'] \) and all \( \lambda \), we have \( \sigma_{\lambda}(t) \in \Lambda^0(\sigma) \). The edge \( \sigma_{\cdot}(a) \equiv \sigma \) is then homotopic to \( \sigma_{\cdot}(a') \) and has the same intersection number, which is \( 0 \).

Similarly, if \( \sigma_{\lambda}(b) \in \Lambda^k(\sigma) \) for some \( k \geq 1 \), then for \( t \) close to \( b \) we have \( \sigma_{0}(t) \in \Lambda^0(\sigma) \), and in the chart which \( \sigma_0(b) \) belongs to, the corresponding \( k \times k \) matrix has all negative eigenvalues. This is the same for \( \sigma_{\lambda}(b) \) when \( \lambda \) is close to \( 0 \), so we may take \( b' \in (a,b) \) and \( \lambda' \in (\lambda_0, 0) \) such that \( \sigma_{0}(b') \) and \( \sigma_{\lambda'}(b) \) are joined by a homotopic path in \( \Lambda^0(\sigma) \).

\begin{center}

\tikzset{every picture/.style={line width=0.75pt}} 

\begin{tikzpicture}[x=0.7pt,y=0.7pt,yscale=-1,xscale=1]

\draw  [draw opacity=0][dash pattern={on 0.84pt off 2.51pt}] (227,46) -- (408.5,46) -- (408.5,187.6) -- (227,187.6) -- cycle ; \draw  [dash pattern={on 0.84pt off 2.51pt}] (227,46) -- (227,187.6)(247,46) -- (247,187.6)(267,46) -- (267,187.6)(287,46) -- (287,187.6)(307,46) -- (307,187.6)(327,46) -- (327,187.6)(347,46) -- (347,187.6)(367,46) -- (367,187.6)(387,46) -- (387,187.6)(407,46) -- (407,187.6) ; \draw  [dash pattern={on 0.84pt off 2.51pt}] (227,46) -- (408.5,46)(227,66) -- (408.5,66)(227,86) -- (408.5,86)(227,106) -- (408.5,106)(227,126) -- (408.5,126)(227,146) -- (408.5,146)(227,166) -- (408.5,166)(227,186) -- (408.5,186) ; \draw  [dash pattern={on 0.84pt off 2.51pt}]  ;
\draw    (210.5,76.6) -- (211.48,156.6) ;
\draw [shift={(211.5,158.6)}, rotate = 269.3] [color={rgb, 255:red, 0; green, 0; blue, 0 }  ][line width=0.75]    (10.93,-3.29) .. controls (6.95,-1.4) and (3.31,-0.3) .. (0,0) .. controls (3.31,0.3) and (6.95,1.4) .. (10.93,3.29)   ;
\draw    (271.5,31.6) -- (353.5,31.6) ;
\draw [shift={(355.5,31.6)}, rotate = 180] [color={rgb, 255:red, 0; green, 0; blue, 0 }  ][line width=0.75]    (10.93,-3.29) .. controls (6.95,-1.4) and (3.31,-0.3) .. (0,0) .. controls (3.31,0.3) and (6.95,1.4) .. (10.93,3.29)   ;
\draw    (240,46) -- (239.5,185.6) ;
\draw    (407,186) -- (239.5,185.6) ;
\draw    (407.5,74.6) -- (407,186) ;
\draw    (240,46) -- (380.5,45.6) ;
\draw    (380.5,45.6) .. controls (378.5,62.6) and (379.5,76.6) .. (407.5,74.6) ;
\draw    (275,86) .. controls (208.83,176.15) and (432.25,196.4) .. (376.36,92.18) ;
\draw [shift={(375.5,90.6)}, rotate = 60.9] [color={rgb, 255:red, 0; green, 0; blue, 0 }  ][line width=0.75]    (10.93,-3.29) .. controls (6.95,-1.4) and (3.31,-0.3) .. (0,0) .. controls (3.31,0.3) and (6.95,1.4) .. (10.93,3.29)   ;
\draw  [fill={rgb, 255:red, 0; green, 0; blue, 0 }  ,fill opacity=1 ] (237.7,46) .. controls (237.7,44.73) and (238.73,43.7) .. (240,43.7) .. controls (241.27,43.7) and (242.3,44.73) .. (242.3,46) .. controls (242.3,47.27) and (241.27,48.3) .. (240,48.3) .. controls (238.73,48.3) and (237.7,47.27) .. (237.7,46) -- cycle ;
\draw  [fill={rgb, 255:red, 0; green, 0; blue, 0 }  ,fill opacity=1 ] (237.2,185.6) .. controls (237.2,184.33) and (238.23,183.3) .. (239.5,183.3) .. controls (240.77,183.3) and (241.8,184.33) .. (241.8,185.6) .. controls (241.8,186.87) and (240.77,187.9) .. (239.5,187.9) .. controls (238.23,187.9) and (237.2,186.87) .. (237.2,185.6) -- cycle ;
\draw  [fill={rgb, 255:red, 0; green, 0; blue, 0 }  ,fill opacity=1 ] (378.2,45.6) .. controls (378.2,44.33) and (379.23,43.3) .. (380.5,43.3) .. controls (381.77,43.3) and (382.8,44.33) .. (382.8,45.6) .. controls (382.8,46.87) and (381.77,47.9) .. (380.5,47.9) .. controls (379.23,47.9) and (378.2,46.87) .. (378.2,45.6) -- cycle ;
\draw  [fill={rgb, 255:red, 0; green, 0; blue, 0 }  ,fill opacity=1 ] (405.2,74.6) .. controls (405.2,73.33) and (406.23,72.3) .. (407.5,72.3) .. controls (408.77,72.3) and (409.8,73.33) .. (409.8,74.6) .. controls (409.8,75.87) and (408.77,76.9) .. (407.5,76.9) .. controls (406.23,76.9) and (405.2,75.87) .. (405.2,74.6) -- cycle ;
\draw  [fill={rgb, 255:red, 0; green, 0; blue, 0 }  ,fill opacity=1 ] (404.7,186) .. controls (404.7,184.73) and (405.73,183.7) .. (407,183.7) .. controls (408.27,183.7) and (409.3,184.73) .. (409.3,186) .. controls (409.3,187.27) and (408.27,188.3) .. (407,188.3) .. controls (405.73,188.3) and (404.7,187.27) .. (404.7,186) -- cycle ;
\draw  [fill={rgb, 255:red, 0; green, 0; blue, 0 }  ,fill opacity=1 ] (224.7,46) .. controls (224.7,44.73) and (225.73,43.7) .. (227,43.7) .. controls (228.27,43.7) and (229.3,44.73) .. (229.3,46) .. controls (229.3,47.27) and (228.27,48.3) .. (227,48.3) .. controls (225.73,48.3) and (224.7,47.27) .. (224.7,46) -- cycle ;
\draw  [fill={rgb, 255:red, 0; green, 0; blue, 0 }  ,fill opacity=1 ] (224.7,186) .. controls (224.7,184.73) and (225.73,183.7) .. (227,183.7) .. controls (228.27,183.7) and (229.3,184.73) .. (229.3,186) .. controls (229.3,187.27) and (228.27,188.3) .. (227,188.3) .. controls (225.73,188.3) and (224.7,187.27) .. (224.7,186) -- cycle ;

\draw (218,23.4) node [anchor=north west][inner sep=0.75pt]    {$a$};
\draw (404,21.4) node [anchor=north west][inner sep=0.75pt]    {$b$};
\draw (314,8.4) node [anchor=north west][inner sep=0.75pt]    {$t$};
\draw (200,33.4) node [anchor=north west][inner sep=0.75pt]    {$0$};
\draw (174,106.4) node [anchor=north west][inner sep=0.75pt]    {$\lambda $};
\draw (198,179.4) node [anchor=north west][inner sep=0.75pt]    {$\lambda _{0}$};
\draw (378,20.4) node [anchor=north west][inner sep=0.75pt]    {$b'$};
\draw (414,63.4) node [anchor=north west][inner sep=0.75pt]    {$\lambda '$};
\draw (240,18.4) node [anchor=north west][inner sep=0.75pt]    {$a'$};

\end{tikzpicture}
\end{center}

This new loop \( \eta \) is still contractible, so \( \alpha(\eta) = \Ind(\eta) = 0 \), and we can calculate its intersection number with \( \Lambda^{\geq 1}(\sigma) \), being
\[
\sum_{t \in (a',b')} \dim(\sigma_0(t) \cap \sigma) - \sum_{\lambda \in (\lambda_0, \lambda')} \dim(\sigma_{\lambda}(b) \cap \sigma) = 0.
\]

Since the sum indexed over \( (a',b') \) and \( (\lambda_0, \lambda') \) is the same as over \( (a,b) \) and \( \lambda < 0 \), we have that
\begin{equation} \label{negativeeigenvaluesformula}
\sum_{t \in (a,b)} \dim(\sigma_0(t) \cap \sigma) = \sum_{\lambda < 0} \dim(\sigma_{\lambda}(b) \cap \sigma).
\end{equation}

In particular, we now know that the number of negative eigenvalues with multiplicites of the Sturm-Liouville problem (\ref{sturmliouville}) is finite, given by (\ref{negativeeigenvaluesformula}). We finally prove this number is the index of \( H_b \):

\begin{prop}
\[
\ind(H_b) = \sum_{\lambda<0}\mathrm{nul}(H_b-\lambda I).
\]
\end{prop}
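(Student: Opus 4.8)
The plan is to read the first equality as the familiar fact that the index of a symmetric form equals its number of negative eigenvalues, and to prove it by tracking how \( \ind(H_b-\lambda I) \) grows as \( \lambda \) increases from \( \lambda_0 \) to \( 0 \). Write \( f(\lambda)=\ind(H_b-\lambda I) \) and \( n(\lambda)=\nul(H_b-\lambda I)=\dim(\sigma_\lambda(b)\cap\sigma) \). By (\ref{negativeeigenvaluesformula}) the number \( m:=\sum_{\lambda<0}n(\lambda) \) is finite, and by the previous theorem \( n(\lambda)=0 \) for \( \lambda\le\lambda_0 \); there the identity \( 0=\int_a^b\|X'\|^2+\langle(R-\lambda I)X,X\rangle \) shows that \( H_b-\lambda I \) is in fact positive definite, so \( f(\lambda)=0 \) on that range.

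First I would establish the easy inequality \( \ind(H_b)\ge m \). For each negative eigenvalue \( \mu \) of (\ref{sturmliouville}) the eigenfields solve \( -X''+RX=\mu X \) with \( X(a)=X(b)=0 \); since \( L=-\tfrac{d^2}{dt^2}+R \) is formally self-adjoint, eigenfields for distinct eigenvalues are \( L^2 \)-orthogonal, so I may pick an \( L^2 \)-orthonormal basis \( \phi_1,\dots,\phi_m \) of the total negative eigenspace. Integrating by parts and using (\ref{indexform2}) gives \( H_b(\phi_i,\phi_j)=\int_a^b\langle L\phi_i,\phi_j\rangle=\mu_i\delta_{ij} \), so \( H_b \) is negative definite on the \( m \)-dimensional span of the \( \phi_i \), whence \( \ind(H_b)\ge m \).

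The reverse inequality comes from the monotone family \( H_b-\lambda I \). For \( \lambda'>\lambda \) one has \( (H_b-\lambda' I)-(H_b-\lambda I)=-(\lambda'-\lambda)\langle\cdot,\cdot\rangle_{L^2}\le 0 \), so \( f \) is non-decreasing, and its \( \lambda \)-derivative \( -\langle\cdot,\cdot\rangle_{L^2} \) is negative definite. This negativity governs the crossings: near an eigenvalue \( \mu \) the kernel \( N=\ker(H_b-\mu I) \) has dimension \( n(\mu) \), and on \( N \) we have \( (H_b-\lambda I)|_N=-(\lambda-\mu)\langle\cdot,\cdot\rangle_{L^2} \), which is positive definite for \( \lambda<\mu \) and negative definite for \( \lambda>\mu \). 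Combining this with a splitting of \( \Gamma_0 \) transverse to \( N \) on which \( H_b-\mu I \) is nondegenerate, I would conclude that \( f \) is left-continuous, locally constant off the eigenvalues, and jumps by exactly \( n(\mu) \) as \( \lambda \) crosses \( \mu \). Summing the jumps over the finitely many eigenvalues in \( [\lambda_0,0) \) then yields \( \ind(H_b)=f(0)=\sum_{\mu<0}n(\mu)=m \), the value at \( 0 \) being correct by left-continuity even when \( b \) is conjugate to \( a \).

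The hard part is making this jump calculus legitimate in infinite dimensions: a priori \( f \) could be infinite, or could increase across a \( \lambda \) that is not an eigenvalue, since nondegeneracy of a symmetric form on an infinite-dimensional space does not by itself make the index locally constant. To control this I would pass to the completion \( \mathcal{H} \) of \( \Gamma_0 \) under the positive-definite inner product \( \langle\langle\cdot,\cdot\rangle\rangle:=(H_b-\lambda_0 I) \). The \( L^2 \)-pairing is bounded on \( \mathcal{H} \) and, by compactness of the inclusion \( \mathcal{H}\hookrightarrow L^2 \), is represented as \( \langle X,Y\rangle_{L^2}=\langle\langle KX,Y\rangle\rangle \) for a compact positive self-adjoint \( K \). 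Then \( H_b=\langle\langle(I+\lambda_0 K)\cdot,\cdot\rangle\rangle \), so \( \ind(H_b) \) equals the number of eigenvalues of \( K \) exceeding \( 1/|\lambda_0| \) — finite by compactness — and these correspond precisely to the negative eigenvalues of (\ref{sturmliouville}). This single input certifies finiteness of the index, the absence of spurious jumps, and the exactness of each jump, so it is the one place where I expect genuine analytic work beyond the topology developed in the previous sections.
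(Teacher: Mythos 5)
Your lower bound \( \ind(H_b)\ge m \) is exactly the paper's: the negative eigenfields of (\ref{sturmliouville}) span an \( m \)-dimensional subspace of \( \Gamma_0 \) on which \( H_b \) is negative definite. For the upper bound you take a genuinely different route. The paper stays finite-dimensional: assuming a negative-definite subspace \( V\subseteq\Gamma_0 \) with \( \dim V>m \), it represents \( H_b|_V \) by an operator \( P \) self-adjoint for the \( L^2 \) inner product on \( V \), diagonalizes \( P \), and claims the resulting eigenvectors produce further negative Sturm--Liouville eigenvalues. You instead complete \( \Gamma_0 \) under \( H_b-\lambda_0 I \), write the \( L^2 \)-pairing as \( \langle\langle K\cdot,\cdot\rangle\rangle \) with \( K \) compact positive, and read off \( \ind(H_b) \) as the number of eigenvalues of \( K \) above \( 1/|\lambda_0| \), which correspond with multiplicity to the negative eigenvalues of (\ref{sturmliouville}). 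Your version buys rigor: the paper's assertion that \( P \) ``coincides with \( L_0 \) on \( V \)'' holds only if \( L_0 \) preserves \( V \); in general \( P \) is the compression of \( L_0 \) to \( V \), its eigenvectors need not be eigenfields, and closing that gap requires a min--max or perturbation argument of the sort your compact-operator setup supplies automatically. The cost is the analytic overhead you yourself flag: compactness of the embedding into \( L^2 \), equivalence of \( H_b-\lambda_0 I \) with the \( H^1_0 \) norm, and elliptic regularity promoting weak eigenvectors of \( K \) to smooth eigenfields so that the count matches \( m \) and the negative subspace meets \( \Gamma_0 \). Two minor remarks: once \( I+\lambda_0 K \) is in hand, the jump calculus for \( f(\lambda) \) in your middle paragraph is superfluous, since the index is read off directly at \( \lambda_0 \); and your argument establishes finiteness of the negative spectrum independently of the topological identity (\ref{negativeeigenvaluesformula}), on which the paper deliberately relies in order to avoid exactly this Hilbert-space machinery.
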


\begin{proof}
The solutions of (\ref{sturmliouville}) in \( \Gamma_0 \) for different \( \lambda \) are \( H_b \)-orthogonal, and the direct sum of the eigenspaces for the negative eigenvalues form a subspace on which \( H_b \) is negative definite. If \( \ind(H_b) \) were bigger than the number of negative eigenvalues, there would be a finite-dimensional subspace \( V \subseteq \Gamma_0 \) on which \( H_b \) is negative-definite and whose dimension is greater than this number. On it, the bilinear symmetric form
\[
\langle \langle X, Y \rangle \rangle \coloneqq \int_a^b \langle X, Y \rangle ds
\]
defines an inner product, so there exists a self-adjoint linear operator \( P: V \to V \) such that
\[
H_b(X,Y) = \langle \langle PX, Y \rangle \rangle = \int_a^b \langle PX, Y \rangle ds.
\]

Evidently \( P \) coincides with \( L_0[X] = -X'' + RX \) on \( V \), and by diagonalizing \( P \) in \( V \), we have a basis of orthonormal eigenvectors with corresponding negative eigenvalues. But this would imply that there are more negative eigenvalues counted with multiplicity than previously accounted for, a contradiction, so the equality in the proposition holds.
\end{proof}

With this last step, we have proved all the identities in theorem \ref{morseindex}, obtaining the desired result.

\nocite{*}

\begin{bibdiv}
\begin{biblist}

\bib{ambrose}{article}{
 author = {W. Ambrose},
 journal = {Ann. of Math.},
 number = {1},
 pages = {49--86},
 publisher = {Annals of Mathematics},
 title = {The {I}ndex {T}heorem in {R}iemannian {G}eometry},
 volume = {73},
 year = {1961}
}

\bib{arnold}{article}{
  title={Characteristic class entering in quantization conditions},
  author={V. I. Arnol’d},
  journal={Funct. Anal. Appl.},
  year={1967},
  volume={1},
  pages={1--13}
}

\bib{bott}{article}{
  title={Marston {M}orse and his mathematical works},
  author={R. Bott},
  journal={Bull. Amer. Math. Soc.},
  year={1980},
  volume={3},
  pages={907-950}
}

\bib{bott2}{article}{
 author = {R. Bott},
 journal = {Ann. of Math.},
 number = {2},
 pages = {313--337},
 publisher = {Annals of Mathematics},
 title = {The {S}table {H}omotopy of the {C}lassical {G}roups},
 volume = {70},
 year = {1959}
}

\bib{docarmo}{book}{
  title={Riemannian {G}eometry},
  author={M. P. do Carmo},
  publisher={Birkhäuser Boston, MA},
  year={1992}
}

\bib{duistermaat}{article}{
title = {On the {M}orse {I}ndex in {V}ariational {C}alculus},
journal = {Adv. Math.},
volume = {21},
number = {2},
pages = {173-195},
year = {1976},
author = {J.J. Duistermaat}
}

\bib{edwards}{article}{
   title =     {A {G}eneralized {S}turm {T}heorem},
   author =    {H. M. Edwards},
   journal   = {Ann. of Math.},
   year =      {1964},
   pages =     {22-57}
}

\bib{floer}{article}{
author = {A. Floer},
title = {{Morse {T}heory for {L}agrangian {I}ntersections}},
volume = {28},
journal = {J. Differential Geom.},
number = {3},
publisher = {Lehigh University},
pages = {513 -- 547},
year = {1988}
}

\bib{giannoni}{article}{
   title =     {A generalized index theorem for {M}orse–{S}turm systems and applications to semi-{R}iemannian geometry},
   author =    {F. Giannoni and A. Masiello and P. Piccione and D. V. Tausk},
   journal   = {Asian J. Math.},
   year =      {2001}
}

\bib{guillemin}{book}{
   title =     {Differential {T}opology},
   author =    {V. Guillemin and A. Pollack},
   publisher = {Prentice-Hall},
   year =      {1974}
}

\bib{helfer}{article}{
  title={Conjugate points on spacelike geodesics or pseudo-self-adjoint {M}orse-{S}turm-{L}iouville systems},
  author={A. D. Helfer},
  journal={Pacific J. Math.},
  year={1994},
  volume={164},
  pages={321-350}
}

\bib{jacobi}{book}{
   title =     {Vorlesungen über {D}ynamik},
   author =    {C. G. J. Jacobi},
   publisher = {Berlin G. Reimer},
   year =      {1866}
}

\bib{klingenberg}{book}{
   title =     {Riemannian {G}eometry},
   author =    {W. Klingenberg},
   publisher = {Walter de Gruyter, Berlin, New York},
   year =      {1982}
}

\bib{lytchak}{article}{
  title={Notes on the {J}acobi equation},
  author={A. Lytchak},
  journal={arXiv:0708.2651 [math.DG]},
  year={2008}
}

\bib{milnor}{book}{
title = {Morse {T}heory},
author = {J. Milnor},
publisher = {Princeton University Press},
address = {Princeton},
year = {1963}
}

\bib{morse}{book}{
   title =     {Introduction to {A}nalysis in the {L}arge},
   author =    {M. Morse},
   publisher = {Institute for Advanced Study},
   year =      {1951},
   edition =   {2}
}

\bib{morse2}{book}{
   title =     {The {C}alculus of {V}ariations in the {L}arge},
   author =    {M. Morse},
   publisher = {American Mathematical Society},
   series =    {Colloquium Publications},
   year =      {1934}
}

\bib{osborn}{article}{
   title =     {The {M}orse {I}ndex {T}heorem},
   author =    {H. Osborn},
   journal   = {Proc. Amer. Math. Soc.},
   year =      {1967},
   pages =     {759-762}
}

\bib{palais}{article}{
title = {Morse theory on {H}ilbert manifolds},
journal = {Topology},
volume = {2},
number = {4},
pages = {299-340},
year = {1963},
author = {R. S. Palais}
}

\bib{piccione}{article}{
   title =     {A note on the {M}orse index theorem for geodesics between submanifolds in semi-{R}iemannian geometry},
   author =    {P. Piccione and D. V. Tausk},
   journal   = {J. Math. Phys.},
   year =      {1999}
}

\bib{piccione2}{unpublished}{
   title =     {On the {G}eometry of {G}rassmannians and the {S}ymplectic {G}roup: the {M}aslov {I}ndex and {I}ts {A}pplications},
   author =    {P. Piccione and D. V. Tausk},
   conference = {
   title = {11\textsuperscript{th} Brazilian School of Differential Geometry},
   address = {Universidade Federal Fluminense}
   },
   date = {2000}
}

\bib{robbin}{article}{
author = {J. Robbin and D. Salamon},
title = {The {S}pectral {F}low and the {M}aslov {I}ndex},
journal = {Bull. Lond. Math. Soc.},
volume = {27},
number = {1},
pages = {1-33},
year = {1995}
}

\bib{simons}{article}{
  title={Minimal {V}arieties in {R}iemannian {M}anifolds},
  author={J. Simons},
  journal={Ann. of Math.},
  year={1968},
  volume={88},
  pages={62}
}

\bib{smale}{article}{
 author = {S. Smale},
 journal = {J. Math. Mech.},
 number = {6},
 pages = {1049--1055},
 publisher = {Indiana University Mathematics Department},
 title = {On the {M}orse {I}ndex {T}heorem},
 volume = {14},
 year = {1965}
}

\bib{uhlenbeck}{article}{
  title={The {M}orse index theorem in {H}ilbert space},
  author={K. K. Uhlenbeck},
  journal={J. Differential Geom.},
  year={1973},
  volume={8},
  pages={555-564}
}

\bib{waterstraat}{article}{
   title =     {A {K}-{T}heoretic {P}roof of the {M}orse {I}ndex {T}heorem in {S}emi-{R}iemannian {G}eometry},
   author =    {N. Waterstraat},
   journal   = {Proc. Amer. Math. Soc.},
   year =      {2012},
   pages =     {337-349}
}

\bib{zhu}{inproceedings}{
author = {C. Zhu},
title = {A generalized {M}orse index {T}heorem},
booktitle = {Analysis, Geometry and Topology of Elliptic Operators},
pages = {493-540},
year = {2006}
}

\end{biblist}
\end{bibdiv}

\end{document}